\providecommand{\U}[1]{\protect\rule{.1in}{.1in}}
\let\oldmathbf\mathbf
\renewcommand{\mathbf}[1]{\boldsymbol{\oldmathbf{#1}}}
\newtheorem{theorem}{Theorem}
\newtheorem{corollary}[theorem]{Corollary}
\newtheorem{lemma}[theorem]{Lemma}
\begin{document}

\title{ An Euler-MacLaurin formula for polygonal sums}
\author[L. Brandolini]{Luca Brandolini}
\address{Dipartimento di Ingegneria Gestionale, dell'Informazione e della Produzione,
Universit\`a degli Studi di Bergamo, Viale Marconi 5, Dalmine BG, Italy}
\email{luca.brandolini@unibg.it}
\author[L. Colzani]{Leonardo Colzani}
\address{Dipartimento di Matematica e Applicazioni, Universit\`a di Milano-Bicocca, Via
Cozzi 55, Milano, Italy}
\email{leonardo.colzani@unimib.it}
\author[S. Robins]{Sinai Robins}
\address{Departamento de ciência da computação,
Instituto de Matemática e Estatistica,
Universidade de São Paulo,
Brasil}
\email{sinai.robins@gmail.com}
\author[G. Travaglini]{Giancarlo Travaglini}
\address{Dipartimento di Matematica e Applicazioni, Universit\`a di Milano-Bicocca, Via
Cozzi 55, Milano, Italy}
\email{giancarlo.travaglini@unimib.it}
\subjclass[2010]{Primary 11H06, 41A55, 42B05, 65B15}
\keywords{Discrepancy, Integer points, Fourier analysis, Euler-Maclaurin formula, Approximate quadratures}
\date{}
\maketitle

\begin{abstract}
We prove an Euler-Maclaurin formula for double polygonal sums and, as a
corollary, we obtain approximate quadrature formulas for integrals of smooth
functions over polygons with integer vertices. Our Euler-Maclaurin formula is
in the spirit of Pick's theorem on the number of integer points in an integer
polygon and involves weighted Riemann sums, using tools from Harmonic analysis.
Finally, we also exhibit a classical
trick, dating back to Huygens and Newton, to accelerate convergence of these Riemann sums.

\end{abstract}%

\section{Introduction}

One motivation for this work comes from an elegant, elementary result
discovered in 1899 by G. Pick: If $P$ is a simple polygon in the Cartesian
plane with vertices with integer coordinates, if $\left\vert P\right\vert $ is
its area, and if $I$ and $B$ are the number of integer points in the interior
and on the boundary, then
\[
\left\vert P\right\vert =I+\dfrac{1}{2}B-1.
\]

There are many simple proofs of this beautiful result, see e.g. \cite{G}. In
particular, in \cite{B-C-R-T} we presented a proof based on harmonic analysis
techniques. One of the goals of this paper is to show how Pick's theorem can
be extended to more general results.  The area is an integral, and the enumeration of
integer points is a Riemann sum. Pick's theorem may therefore be thought of as
 a particular case of a
quadrature rule, as well as a particular case of an Euler-Maclaurin formula for double sums.
Such ideas have been pursued before 
(\cite{B-V}, \cite{G-P}, \cite{G-S}, \cite{K-S-W1}, \cite{K-S-W2}, \cite{K-S-W3}) but here we emphasize solid angle weights at all
integer points, thereby getting a  new  type  of  weighted  Euler-Maclaurin   summation  formula  and  quadrature  formula, with lower-order error terms.

Let $P$ be an integer polytope in $\mathbb{R}^{d}$, meaning that all vertices of $P$ have
integer coordinates. The normalized angle at a point $x$ is defined by the
proportion of $P$ in a small ball centered at $x$:
\[
\omega_{P}\left(  x\right)  =\lim_{\varepsilon\rightarrow0+}\left\{
\dfrac{\left\vert P\cap\left\{y:  \left\vert x-y\right\vert <\varepsilon
\right\}  \right\vert }{\left\vert \left\{y:  \left\vert y\right\vert
<\varepsilon\right\}  \right\vert }\right\}  .
\]
In particular, in dimension $d=1$, $P$ is a segment, $\omega_{P}\left(
x\right)  =0$ if $x$ is outside $P$, $\omega_{P}\left(  x\right)  =1$ if $x$
is inside $P$, $\omega_{P}\left(  x\right)  =1/2$ at the extremes. In
dimension $d=2$, $P$ is a polygon, $\omega_{P}\left(  x\right)  =0$ if $x$ is
outside $P$, $\omega_{P}\left(  x\right)  =1$ if $x$ is inside $P$,
$\omega_{P}\left(  x\right)  =1/2$ if $x$ is on a side but it is not a vertex,
$\omega_{P}\left(  x\right)  =\vartheta/2\pi$ if $x$ is a vertex with interior
angle $\vartheta$.

Our goal is to compare the integral of a smooth function $g\left(  x\right)  $
over  $P$,%
\[%
{\displaystyle\int_{P}}
g\left(  x\right)  dx
\]
with the weighted Riemann sums over the lattice $N^{-1}\mathbb{Z}^{d}$,
\[
N^{-d}%
{\displaystyle\sum_{n\in\mathbb{Z}^{d}}}
\omega_{P}\left(  N^{-1}n\right)  g\chi_{P}\left(  N^{-1}n\right)  .
\]
The Fourier transform of an integrable function $f\left(  x\right)  $ is
defined by%
\[
\widehat{f}\left(  \xi\right)  =%
{\displaystyle\int_{\mathbb{R}^{d}}}
f\left(  x\right)  e^{-2\pi i\xi xdx}dx.
\]
Let $\varphi\left(  x\right)  $ be a radial smooth function with compact
support and integral 1 and, for $\varepsilon>0$, define $\varphi_{\varepsilon
}\left(  x\right)  =\varepsilon^{-d}\varphi\left(  \varepsilon^{-1}x\right)
$. Then one easily verify that%
\begin{align*}
&  \lim_{\varepsilon\rightarrow0+}\left\{  \varphi_{\varepsilon}\ast\left(
g\chi_{P}\right)  \left(  x\right)  \right\} \\
=  &  \lim_{\varepsilon\rightarrow0+}\left\{
{\displaystyle\int_{\mathbb{R}^{d}}}
\varepsilon^{-d}\varphi\left(  \varepsilon^{-1}y\right)  g\left(  x-y\right)
\chi_{P}\left(  x-y\right)  dy\right\}  =\omega_{P}\left(  x\right)  g\left(
x\right)  .
\end{align*}
Hence, by the Poisson summation formula,%
\begin{align*}
&  N^{-d}%
{\displaystyle\sum_{n\in\mathbb{Z}^{d}}}
\omega_{P}\left(  N^{-1}n\right)  g\left(  N^{-1}n\right) \\
=  &
{\displaystyle\int_{P}}
g\left(  x\right)  dx+\lim_{\varepsilon\rightarrow0+}\left\{
{\displaystyle\sum_{n\in\mathbb{Z}^{d}-\left\{  0\right\}  }}
\widehat{\varphi}\left(  \varepsilon n\right)  \widehat{g\chi_{P}}\left(
Nn\right)  \right\}  .
\end{align*}
It should be emphasized that this application of the Poisson summation formula
and the existence of the limit are easy to prove, although not entirely trivial. 
 The function $g\left(
x\right)  \chi_{P}\left(  x\right)  $ is discontinuous when $g\left(
x\right)  \neq0$ on the boundary of $P$, hence the Fourier transform
$\widehat{g\chi_{P}}\left(  Nn\right)  $ is not absolutely integrable. The
convolution with a test function allows us to bypass the problem.

The main result of this paper is an asymptotic expansion for above weighted
Riemann sums, in dimension $d=2$,%
\[
N^{-2}%
{\displaystyle\sum_{n\in\mathbb{Z}^{2}}}
\omega_{P}\left(  N^{-1}n\right)  g\left(  N^{-1}n\right)  =%
{\displaystyle\int_{P}}
g\left(  x\right)  dx+\frac{\alpha}{N^{2}}+\frac{\beta}{N^{4}}+\frac{\gamma
}{N^{6}}+\cdots.
\]
See Theorem \ref{Theorem 3} below. Our main tools are the Poisson summation
formula together with an asympotic expansion for the Fourier transform of the
function $g\left(  x\right)  \chi_{P}\left(  x\right)  $. 

First we claim that $\widehat{g\chi_{P}%
}\left(  n\right)  $ has an asymptotic expansion.    Let us
explain this point in some more detail.   See \cite{D-L-R} for the
case $g\left(  x\right)=1$.   We offer the following outline of a possible proof.
In dimension $2$, we shall give a rigorous proof, following a different argument.

\bigskip

{\bf Claim}. {\it  If $P$ is an integer polyhedron and $g\left(  x\right)  $ is a smooth
function, then there exist functions $\left\{  \mathcal{A}_{j}\left(
n\right)  \right\}  $ homogeneous of degree $-j$, odd when $j$ is odd and even
when $j$ is even, such that the Fourier transform of $g\left(  x\right)
\chi_{P}\left(  x\right)  $ has for every $n\in Z^{d}-\left\{  0\right\}  $ an
asymptotic expansion}
\[
\widehat{g\chi_{P}}\left(  n\right)  =%
{\displaystyle\sum_{j=1}^{+\infty}}
\mathcal{A}_{j}\left(  n\right)  .
\]
\medskip

To see why the claim above holds, suppose that $\nabla$ is the gradient and $\mathbf{n}\left(  x\right)  $
is the outward unit normal to $\partial P$ at the point $x$, then for every
$n\neq0$ the divergence theorem gives
\begin{align*}
\widehat{\chi_{P}g}\left(  n\right)   &  =%
{\displaystyle\int_{P}}
g\left(  x\right)  e^{-2\pi in\cdot x}dx\\
&  =%
{\displaystyle\int_{P}}
\operatorname{div}\left(  \dfrac{g\left(  x\right)  e^{-2\pi in\cdot x}}{-2\pi
i\left\vert n\right\vert ^{2}}n\right)  dx-%
{\displaystyle\int_{P}}
\dfrac{n\cdot\nabla g\left(  x\right)  }{-2\pi i\left\vert n\right\vert ^{2}%
}e^{-2\pi in\cdot x}dx\\
&  =%
{\displaystyle\int_{\partial P}}
\dfrac{n\cdot\mathbf{n}\left(  x\right)  g\left(  x\right)  }{-2\pi
i\left\vert n\right\vert ^{2}}e^{-2\pi in\cdot x}dx-%
{\displaystyle\int_{P}}
\dfrac{n\cdot\nabla g\left(  x\right)  }{-2\pi i\left\vert n\right\vert ^{2}%
}e^{-2\pi in\cdot x}dx.
\end{align*}
Observe that the last integral is formally analog to the first one that
defines $\widehat{g\chi_{P}}\left(  n\right)  $, but with $g\left(  x\right)
$ replaced by $\left(  n\cdot\nabla g\left(  x\right)  \right)  /\left(  2\pi
i\left\vert n\right\vert ^{2}\right)  $, which is smaller by a factor
$\left\vert n\right\vert ^{-1}$ when $\left\vert n\right\vert \rightarrow
+\infty$. Moreover, if $\left\{  F\right\}  $ are the $d-1$ dimensional faces
and $\left\{  \mathbf{n}\left(  F\right)  \right\}  $ the outward normals to
these faces, then
\[%
{\displaystyle\int_{\partial P}}
\dfrac{n\cdot\mathbf{n}\left(  x\right)  g\left(  x\right)  }{-2\pi
i\left\vert n\right\vert ^{2}}e^{-2\pi in\cdot x}dx=%
{\displaystyle\sum_{F}}
\dfrac{\mathbf{n}\left(  F\right)  \cdot n}{-2\pi i\left\vert n\right\vert
^{2}}%
{\displaystyle\int_{F}}
g\left(  x\right)  e^{-2\pi in\cdot x}dx.
\]
Fix a face $F$. Two cases are possible. If $n$ is orthogonal to the face, then
$e^{-2\pi in\cdot x}=1$ for every $x$ in the face, because of the assumption
that $n$ is an integer point and $P$ is an integer polyhedron, and the
integral over this $d-1$ dimensional face give a term homogeneous of degree
$-1$ in the asymptotic expansion. If $n$ is not orthogonal to the face, then
$e^{-2\pi in\cdot x}$ oscillates, and in a suitable coordinate system one can
apply the divergence theorem again. Again there are two possible cases. If $n$
is orthogonal to a $d-2$ dimensional face, the integral gives a term
homogeneous of degree $-2$ in the asymptotic expansion. If $n$ is not
orthogonal to a $d-2$ dimensional face, then we can keep going \ldots

\bigskip

We also conjecture that an asymptotic expansion for the Fourier transform
gives an asymptotic expansion for the weighted Riemann sums.

\bigskip

{\bf Conjecture.}  {\it  If $P$ is an integer polyhedron and $g\left(  x\right)  $ is a smooth
function, then there exist constants $\left\{  \delta\left(  h\right)
\right\}  $ such that for every integer $N$ one has an asymptotic expansion}
\[
N^{-d}%
{\displaystyle\sum_{n\in\mathbb{Z}^{d}}}
\omega_{P}\left(  N^{-1}n\right)  g\left(  N^{-1}n\right)  =%
{\displaystyle\int_{P}}
g\left(  x\right)  dx+%
{\displaystyle\sum_{h=1}^{+\infty}}
\dfrac{\delta\left(  h\right)  }{N^{2h}}.
\]

A possible approach to a proof is as follows.   By the Poisson summation formula, the asymptotic expansion of
$\widehat{g\chi_{P}}\left(  Nn\right)$, and the homogeneity $\mathcal{A}%
_{j}\left(  Nn\right)  =N^{-j}\mathcal{A}_{j}\left(  n\right)  $, one has
\begin{align*}
&  N^{-d}%
{\displaystyle\sum_{n\in\mathbb{Z}^{d}}}
\omega_{P}\left(  N^{-1}n\right)  g\left(  N^{-1}n\right) \\
=  &
{\displaystyle\int_{P}}
g\left(  x\right)  dx+\lim_{\varepsilon\rightarrow0+}\left\{
{\displaystyle\sum_{n\in\mathbb{Z}^{d}-\left\{  0\right\}  }}
\widehat{\varphi}\left(  \varepsilon n\right)  \widehat{g\chi_{P}}\left(
Nn\right)  \right\} \\
=  &
{\displaystyle\int_{P}}
g\left(  x\right)  dx+%
{\displaystyle\sum_{j=1}^{+\infty}}
N^{-j}\lim_{\varepsilon\rightarrow0+}\left\{
{\displaystyle\sum_{n\in\mathbb{Z}^{d}-\left\{  0\right\}  }}
\widehat{\varphi}\left(  \varepsilon n\right)  \mathcal{A}_{j}\left(
n\right)  \right\}  .
\end{align*}
By the symmetry $\mathcal{A}_{j}\left(  -n\right)  =\left(  -1\right)
^{j}\mathcal{A}_{j}\left(  n\right)  $, the terms of odd homogeneity sum to
zero,
\[%
{\displaystyle\sum_{n\in\mathbb{Z}^{d}-\left\{  0\right\}  }}
\widehat{\varphi}\left(  \varepsilon n\right)  \mathcal{A}_{2h+1}\left(
n\right)  =0.
\]
Moreover, the terms of even homogeneity are summable to finite limits,
\[
\lim_{\varepsilon\rightarrow0+}\left\{
{\displaystyle\sum_{n\in\mathbb{Z}^{d}-\left\{  0\right\}  }}
\widehat{\varphi}\left(  \varepsilon n\right)  \mathcal{A}_{2h}\left(
n\right)  \right\}  =\delta\left(  h\right)  .
\]
The existence of the limit is not obvious, since the limit series with 
$2h\leq  d$ are not absolutely convergent, they are only summable with the multiplier
$\widehat{\varphi}\left(  \varepsilon n\right)$.

\bigskip

See Corollary 5.3 in \cite{M-R} for a formula for $\delta\left(  1\right)  $
in the case $g\left(  x\right)  =1$. In what follows we shall show that this
conjecture is true at least in dimensions $1$ and $2$.   Our main result is Theorem \ref{Theorem 3}, 
and we include the already known theorems \ref{Theorem 1} and \ref{Theorem 2}, 
for the sake of comparison with Theorem \ref{Theorem 3}.

\section{One-dimensional Euler-Maclaurin summation}

The following Fourier analytic proof of the one dimensional Euler-Maclaurin 
summation formula is essentially the one of Poisson. See Chapter XIII in
\cite{H}. Recall that there are two possible definitions of Bernoulli
polynomials, which differ by a factorial. Here the Bernoulli polynomials in
the interval $0\leq t\leq1$ are defined recursively by
\[
B_{0}(x)=1,\ \ \ \dfrac{d}{dt}B_{j+1}(x)=B_{j}(x),\ \ \
{\displaystyle\int_{0}^{1}}
B_{j+1}(x)dx=0.
\]
In particular, $B_{0}(x)=1$, $B_{1}(x)=x-1/2$, $B_{2}(x)=x^{2}/2-x/2+1/12$,
$B_{3}(x)=x^{3}/6-x^{2}/4+x/12$,... A direct computation shows that the
periodization of $B_{1}(x)$ has the Fourier expansion%
\[
x-\left[  x\right]  -1/2=%
{\displaystyle\sum\limits_{n\in\mathbb{Z}}}
\left(
{\displaystyle\int_{0}^{1}}
\left(  y-1/2\right)  e^{-2\pi iny}dy\right)  e^{2\pi inx}=-%
{\displaystyle\sum\limits_{n\in\mathbb{Z-}\left\{  0\right\}  }}
\dfrac{e^{2\pi inx}}{2\pi in}.
\]
A repeated integration term by term of this series gives the Fourier
expansions of the other $B_{j}(x)$,
\[
B_{j}(x-\left[  x\right]  )=-%
{\displaystyle\sum\limits_{n\in\mathbb{Z-}\left\{  0\right\}  }}
\dfrac{e^{2\pi inx}}{\left(  2\pi in\right)  ^{j}}.
\]

The following elementary computation will be used here and in the next section.

\bigskip

\begin{lemma}
\label{Lemma 1}If the function $g\left(  x\right)  $ is smooth, then for every $y \not= 0$,
\begin{align*}
&
{\displaystyle\int_{a}^{b}}
g\left(  x\right)  e^{-2\pi ixy}dx\\
=  &
{\displaystyle\sum_{j=0}^{w}}
\left(  2\pi iy\right)  ^{-j-1}\left(  e^{-2\pi iay}\dfrac{d^{j}}{dx^{j}%
}g\left(  a\right)  -e^{-2\pi iby}\dfrac{d^{j}}{dx^{j}}g\left(  b\right)
\right) \\
&  +\left(  2\pi iy\right)  ^{-w-1}%
{\displaystyle\int_{a}^{b}}
\dfrac{d^{w+1}}{dx^{w+1}}g\left(  x\right)  e^{-2\pi ixy}dx.
\end{align*}

\end{lemma}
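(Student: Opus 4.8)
The plan is to prove the identity by induction on $w$, with a single integration by parts serving as the engine of both the base case and the inductive step. First I would establish the case $w=0$. Taking $u=g(x)$ and $dv=e^{-2\pi ixy}\,dx$, so that $v=(-2\pi iy)^{-1}e^{-2\pi ixy}$, integration by parts on $\int_a^b g(x)e^{-2\pi ixy}\,dx$ produces the boundary term $(2\pi iy)^{-1}\left(e^{-2\pi iay}g(a)-e^{-2\pi iby}g(b)\right)$ together with the remainder $(2\pi iy)^{-1}\int_a^b g'(x)e^{-2\pi ixy}\,dx$; this is exactly the asserted formula with $w=0$. The hypothesis $y\neq0$ is what makes $v$ well defined and is used throughout.

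For the inductive step, I would assume the formula holds for some $w\geq0$ and apply the very same integration-by-parts identity, but now to the remainder integral $\int_a^b g^{(w+1)}(x)e^{-2\pi ixy}\,dx$, with $g$ replaced by its derivative $g^{(w+1)}$. This rewrites that integral as $(2\pi iy)^{-1}\left(e^{-2\pi iay}g^{(w+1)}(a)-e^{-2\pi iby}g^{(w+1)}(b)\right)+(2\pi iy)^{-1}\int_a^b g^{(w+2)}(x)e^{-2\pi ixy}\,dx$. Multiplying through by the prefactor $(2\pi iy)^{-w-1}$ carried by the remainder and substituting back into the inductive hypothesis, the new boundary term acquires the coefficient $(2\pi iy)^{-w-2}$ and thus slots in precisely as the $j=w+1$ summand, while the leftover integral becomes the remainder term for $w+1$. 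This closes the induction.

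Since the whole argument is a bounded iteration, I could equally present it non-inductively as $w+1$ successive integrations by parts, observing that the scheme telescopes. The main point requiring care is bookkeeping rather than analysis: tracking the sign conventions (the minus sign in $v$ flips the order of the $a$ and $b$ boundary contributions, which is why $g(a)$ appears with the positive sign) and verifying that the accumulated power of $(2\pi iy)^{-1}$ matches the index $j$ in the stated sum. No convergence or regularity subtlety arises, since $g$ is smooth and the interval $[a,b]$ is compact, so each integration by parts and each derivative is justified directly.
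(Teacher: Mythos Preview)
Your proof is correct and takes essentially the same approach as the paper, which simply remarks that the identity follows by iterated integration by parts. You have supplied the details of that iteration via induction, with the bookkeeping handled correctly.
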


\begin{proof}
It follows by an iterated integration by parts.
\end{proof}

\bigskip

\begin{theorem}
\label{Theorem 1}If $g\left(  x\right)  $ is a smooth function on $R$, and $a$
and $b$ and $N$ are integers, then for every integer $w$,
\begin{align*}
&  \dfrac{1}{N}\left(  \dfrac{1}{2}g\left(  a\right)  +%
{\displaystyle\sum_{n=Na+1}^{Nb-1}}
g\left(  n/N\right)  +\dfrac{1}{2}g\left(  b\right)  \right) \\
=  &
{\displaystyle\int_{a}^{b}}
g\left(  x\right)  dx+\sum_{j=0}^{w}N^{-j-1}B_{j+1}\left(  0\right)  \left(
g^{\left(  j\right)  }\left(  b\right)  -g^{\left(  j\right)  }\left(
a\right)  \right) \\
&  +\left(  -1\right)  ^{w}N^{-w-1}%
{\displaystyle\int_{a}^{b}}
B_{w+1}\left(  Nx-\left[  Nx\right]  \right)  g^{\left(  w+1\right)  }\left(
x\right)  dx.
\end{align*}
Finally, since $B_{j+1}\left(  0\right)  =0$ when $j$ is even, in the last sum
only odd $j$ are involved.
\end{theorem}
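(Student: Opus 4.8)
The plan is to deduce the identity from the Poisson summation formula recorded in the introduction, specialized to $d=1$ and $P=[a,b]$. First I would note that the left-hand side is precisely the weighted Riemann sum $N^{-1}\sum_{n\in\mathbb{Z}}\omega_{[a,b]}(n/N)\,g(n/N)$: the normalized angle equals $1$ at the interior nodes $Na<n<Nb$ and $1/2$ at the two endpoints $n=Na$ and $n=Nb$, which reproduces the half-weights $\tfrac12 g(a)$ and $\tfrac12 g(b)$. Thus the introduction's identity reduces the theorem to computing $\lim_{\varepsilon\to0+}\sum_{n\neq0}\widehat{\varphi}(\varepsilon n)\,\widehat{g\chi_{[a,b]}}(Nn)$ and matching it term by term with the Bernoulli sum and the integral remainder.

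Next I would expand each Fourier coefficient $\widehat{g\chi_{[a,b]}}(Nn)=\int_a^b g(x)e^{-2\pi iNnx}\,dx$ by applying Lemma \ref{Lemma 1} with $y=Nn$. The decisive simplification is arithmetic: since $a,b,N,n$ are integers, $e^{-2\pi iaNn}=e^{-2\pi ibNn}=1$, so the boundary part of Lemma \ref{Lemma 1} collapses to $\sum_{j=0}^w(2\pi iNn)^{-j-1}\bigl(g^{(j)}(a)-g^{(j)}(b)\bigr)$, while the tail becomes $(2\pi iNn)^{-w-1}\int_a^b g^{(w+1)}(x)e^{-2\pi iNnx}\,dx$.

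I would then insert this expansion into the $\varepsilon$-limit and handle the two pieces separately. For the boundary part, pulling out the homogeneity factor $N^{-j-1}$ leaves the scalar sums $\sum_{n\neq0}\widehat{\varphi}(\varepsilon n)(2\pi in)^{-j-1}$; since $\widehat{\varphi}(\varepsilon\,\cdot)$ is even while $n\mapsto(2\pi in)^{-j-1}$ is odd exactly when $j$ is even, the terms of even order---including $j=0$---cancel in the pairs $n\leftrightarrow-n$, leaving only odd $j$, as asserted at the end of the statement. For odd $j$ one has $j+1\geq2$, so the series converges absolutely, and letting $\varepsilon\to0+$ and evaluating the Bernoulli Fourier expansion at $x=0$ gives $\sum_{n\neq0}(2\pi in)^{-j-1}=-B_{j+1}(0)$; this reproduces exactly $\sum_{j\ \mathrm{odd}}N^{-j-1}B_{j+1}(0)\bigl(g^{(j)}(b)-g^{(j)}(a)\bigr)$. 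For the tail I would interchange the $n$-sum with the $x$-integral and recognize, again from the Bernoulli Fourier expansion, that $\sum_{n\neq0}\widehat{\varphi}(\varepsilon n)(2\pi in)^{-w-1}e^{-2\pi iNnx}\to(-1)^wB_{w+1}(Nx-[Nx])$, which yields the stated remainder $(-1)^wN^{-w-1}\int_a^b B_{w+1}(Nx-[Nx])\,g^{(w+1)}(x)\,dx$.

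The main obstacle is analytic rather than algebraic: justifying the interchange of the limit $\varepsilon\to0+$, the summation over $n$, and the integration in $x$, especially in the borderline cases $j+1=1$ and $j+1=2$ where the Bernoulli series is only conditionally convergent. Here the multiplier $\widehat{\varphi}(\varepsilon n)$ is essential, since it renders each intermediate sum absolutely convergent; the rapid decay of $\widehat{\varphi}$ together with the uniform bounds on $g^{(w+1)}$ over $[a,b]$ should then let me pass to the limit by dominated convergence. I would finally verify that the parity cancellation of the even-order terms is exactly what absorbs the endpoint half-weights, so that no spurious $B_1(0)$ contribution survives.
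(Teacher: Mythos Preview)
Your proposal is correct and follows essentially the same route as the paper's proof: Poisson summation applied to a mollification of $g\chi_{[a,b]}$, Lemma~\ref{Lemma 1} to expand the Fourier transform, and identification of the resulting $\varepsilon\to0$ limits with Bernoulli numbers and the periodized Bernoulli polynomial. The only cosmetic differences are that the paper first reduces to $N=1$ by rescaling $g$, and that your closing remark conflates two separate mechanisms---the endpoint half-weights are already encoded in the left side via $\omega_{[a,b]}$, while the vanishing of the $j=0$ term is purely the odd-symmetry cancellation you already identified.
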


\begin{proof}
The formula with $N\neq1$ follows from the one with $N=1$. It suffices to
replace $g\left(  x\right)  $ with $N^{-1}g\left(  N^{-1}x\right)  $ and $a$
and $b$ with $Na$ and $Nb$. Hence there is no loss of generality in assuming
$N=1$. If $\varphi\left(  t\right)  $ is a smooth even function with compact
support and integral 1, then,
\[
\lim_{\varepsilon\rightarrow0+}\left\{  \varphi_{\varepsilon}\ast\left(
g\chi_{\left[  a,b\right]  }\right)  \left(  x\right)  \right\}  =\left\{
\begin{array}
[c]{l}%
g\left(  x\right)  \ \ \ \text{if }a<x<b\text{,}\\[0.1cm]%
g\left(  x\right)  /2\ \ \ \text{if }x=a\text{ or }x=b\text{,}\\[0.1cm]%
0\ \ \ \text{if }x<a\text{ or }x>b\text{.}%
\end{array}
\right.
\]
Hence, by the Poisson summation formula,
\begin{align*}
&  \dfrac{1}{2}g\left(  a\right)  +%
{\displaystyle\sum_{n=a+1}^{b-1}}
g\left(  n\right)  +\dfrac{1}{2}g\left(  b\right)  =%
{\displaystyle\sum_{n=-\infty}^{+\infty}}
\lim_{\varepsilon\rightarrow0+}\left\{  \varphi_{\varepsilon}\ast\left(
g\chi_{\left[  a,b\right]  }\right)  \left(  n\right)  \right\} \\
=  &  \lim_{\varepsilon\rightarrow0+}\left\{
{\displaystyle\sum_{n=-\infty}^{+\infty}}
\varphi_{\varepsilon}\ast\left(  g\chi_{\left[  a,b\right]  }\right)  \left(
n\right)  \right\}  =\lim_{\varepsilon\rightarrow0+}\left\{
{\displaystyle\sum_{n=-\infty}^{+\infty}}
\widehat{\varphi}\left(  \varepsilon n\right)  \widehat{g\chi_{\left[
a,b\right]  }}\left(  n\right)  \right\}  .
\end{align*}
The interchange of sum and limit is justified since the convolution
$\varphi_{\varepsilon}\ast\left(  g\chi_{\left[  a,b\right]  }\right)  \left(
n\right)  $ is bounded with uniformly bounded support, hence the sum has only
a finite and bounded number of nonzero terms. And also the application of the
Poisson summation formula is legitimate, since it has been applied to a
mollification of the discontinuous function $g\left(  x\right)  \chi_{\left[
a,b\right]  }\left(  x\right)  $. Indeed all series in the above formulas are
absolutely convergent. By Lemma \ref{Lemma 1}, the Fourier transform of
$g\left(  x\right)  \chi_{\left[  a,b\right]  }\left(  x\right)  $ has the
asymptotic expansion
\begin{align*}
&  \widehat{g\chi_{\left[  a,b\right]  }}\left(  n\right)  =%
{\displaystyle\int_{a}^{b}}
g\left(  x\right)  e^{-2\pi inx}dx\\
=  &  \left\{
\begin{array}
[c]{ll}%
{\displaystyle\int_{a}^{b}}
g\left(  x\right)  dx\ \ \  & \text{if }n=0\text{,}\\[0.3cm]%
{\displaystyle\sum_{j=0}^{w}}
\dfrac{g^{\left(  j\right)  }\left(  a\right)  -g^{\left(  j\right)  }\left(
b\right)  }{\left(  2\pi in\right)  ^{j+1}}+\dfrac{1}{\left(  2\pi in\right)
^{w+1}}%
{\displaystyle\int_{a}^{b}}
g^{\left(  w+1\right)  }\left(  x\right)  e^{-2\pi inx}dx & \text{if }%
n\neq0\text{.}%
\end{array}
\right.
\end{align*}
We used the assumptions that $a$, $b$, and $n$, are integers, so that
$e^{-2\pi ina}=e^{-2\pi inb}=1$. Hence,
\begin{align*}
&  \dfrac{1}{2}g\left(  a\right)  +%
{\displaystyle\sum_{n=a+1}^{b-1}}
g\left(  n\right)  +\dfrac{1}{2}g\left(  b\right)  =\lim_{\varepsilon
\rightarrow0+}\left\{
{\displaystyle\sum_{n\in\mathbb{Z}}}
\widehat{\varphi}\left(  \varepsilon n\right)  \widehat{g\chi_{\left[
a,b\right]  }}\left(  n\right)  \right\} \\
=  &
{\displaystyle\int_{a}^{b}}
g\left(  t\right)  dt+\sum_{j=0}^{w}\lim_{\varepsilon\rightarrow0+}\left\{
{\displaystyle\sum_{n\in\mathbb{Z}-\left\{  0\right\}  }}
\dfrac{\widehat{\varphi}\left(  \varepsilon n\right)  }{\left(  2\pi
in\right)  ^{j+1}}\right\}  \left(  g^{\left(  j\right)  }\left(  a\right)
-g^{\left(  j\right)  }\left(  b\right)  \right) \\
&  +%
{\displaystyle\int_{a}^{b}}
\lim_{\varepsilon\rightarrow0+}\left\{
{\displaystyle\sum_{n\in\mathbb{Z}-\left\{  0\right\}  }}
\dfrac{\widehat{\varphi}\left(  \varepsilon n\right)  }{\left(  2\pi
in\right)  ^{w+1}}e^{-2\pi inx}\right\}  g^{\left(  w+1\right)  }\left(
x\right)  dx.
\end{align*}
The limits as $\varepsilon\rightarrow0+$ give the Bernoulli numbers and
polynomials,
\begin{align*}
&  \dfrac{1}{2}g\left(  a\right)  +%
{\displaystyle\sum_{n=a+1}^{b-1}}
g\left(  n\right)  +\dfrac{1}{2}g\left(  b\right) \\
=  &
{\displaystyle\int_{a}^{b}}
g\left(  x\right)  dx+\sum_{j=0}^{w}B_{j+1}\left(  0\right)  \left(
g^{\left(  j\right)  }\left(  b\right)  -g^{\left(  j\right)  }\left(
a\right)  \right) \\
&  +\left(  -1\right)  ^{w}%
{\displaystyle\int_{a}^{b}}
B_{w+1}\left(  x-\left[  x\right]  \right)  g^{\left(  w+1\right)  }\left(
x\right)  dx.
\end{align*}
Finally observe that, by the symmetry of the sums that define $B_{j+1}\left(
0\right)  $, one has $B_{j+1}\left(  0\right)  =0$ when $j$ is even. Hence in
the last sum only odd $j$ are involved.
\end{proof}

\bigskip

Observe that if in the Euler-Maclaurin summation formula one disregards the
terms with $j\geq1$ and the remainder, then one obtains the trapezoidal rule
for approximating integrals,
\[
\left\vert
{\displaystyle\int_{a}^{b}}
g\left(  t\right)  dt-\dfrac{1}{N}\left(  \dfrac{1}{2}g\left(  a\right)  +%
{\displaystyle\sum_{n=Na+1}^{Nb-1}}
g\left(  n/N\right)  +\dfrac{1}{2}g\left(  b\right)  \right)  \right\vert
\leq\dfrac{C}{N^{2}}.
\]

\section{Two-dimensional Euler-Maclaurin summation}

A two dimensional generalization of the Euler-Maclaurin summation formula is
the following, known result.  We include this result here for the sake of comparison with
the main result, Theorem \ref{Theorem 3}.

\bigskip

\begin{theorem}
\textbf{\label{Theorem 2}}If $P$ is an open integer polygon, or a closed
integer polygon, and if $g\left(  x\right)  $ is a smooth function, then there
exist constants $\left\{  \gamma\left(  j\right)  \right\}  $ such that for
every positive integers $w$ and $N$,
\[
N^{-2}%
{\displaystyle\sum_{n\in\mathbb{Z}^{2},\ N^{-1}n\in P}}
g\left(  N^{-1}n\right)  =%
{\displaystyle\int_{P}}
g\left(  x\right)  dx+%
{\displaystyle\sum_{j=1}^{w}}
\dfrac{\gamma\left(  j\right)  }{N^{j}}+\dfrac{R\left(  w,N\right)  }{N^{w+1}%
}.
\]
The remainder $R\left(  w,N\right)  $ can be bounded by a constant $C$ which does not depend on $N$.
\end{theorem}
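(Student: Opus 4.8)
The plan is to prove Theorem 2 by the same Fourier-analytic machinery used for Theorem 1, now in two dimensions. The starting point is the Poisson summation formula applied to a mollification of the discontinuous function $g\chi_{P}$, exactly as in the proof of Theorem 1. Setting $N=1$ without loss of generality (the general case follows by replacing $g(x)$ with $N^{-2}g(N^{-1}x)$ and rescaling $P$), one writes
\[
\sum_{n\in\mathbb{Z}^{2}}\omega_{P}(n)g(n)
=\int_{P}g(x)\,dx+\lim_{\varepsilon\to0+}\Bigl\{\sum_{n\in\mathbb{Z}^{2}-\{0\}}\widehat{\varphi}(\varepsilon n)\,\widehat{g\chi_{P}}(n)\Bigr\}.
\]
The left side differs from the sum $\sum_{N^{-1}n\in P}g$ only by the boundary weights, which for an integer polygon contribute explicitly computable lower-order terms; so the heart of the matter is an asymptotic expansion of $\widehat{g\chi_{P}}(n)$ together with control of the summability of each homogeneous piece against the multiplier $\widehat{\varphi}(\varepsilon n)$.

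\emph{First} I would produce the asymptotic expansion of $\widehat{g\chi_{P}}(n)$ by iterating the divergence-theorem identity displayed in the Claim of the introduction. Each application converts an integral over $P$ into a sum of boundary integrals over the edges $F$ plus an integral over $P$ of a function smaller by a factor $|n|^{-1}$. For a fixed edge $F$ with integer endpoints and outward normal $\mathbf{n}(F)$, I would then apply the one-dimensional Lemma 1 to the line integral $\int_{F}g(x)e^{-2\pi in\cdot x}\,dx$, parametrizing $F$ by arclength. Because the vertices are integer points, the phase $e^{-2\pi in\cdot x}$ equals $1$ at both endpoints, so the boundary terms from Lemma 1 localize at the vertices and produce contributions homogeneous of degree $-j$ in $n$; the degenerate case where $n$ is orthogonal to $F$ is handled separately and contributes the leading degree $-1$ term. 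Collecting everything to order $w$ yields
\[
\widehat{g\chi_{P}}(n)=\sum_{j=1}^{w}\mathcal{A}_{j}(n)+E_{w}(n),
\]
with $\mathcal{A}_{j}$ homogeneous of degree $-j$ and a remainder $E_{w}(n)=O(|n|^{-w-1})$ that comes from the last $P$-integral together with the tail of Lemma 1 on each edge.

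\emph{Next} I would insert this expansion into the Poisson sum and separate the terms. After rescaling, each $\mathcal{A}_{j}(Nn)=N^{-j}\mathcal{A}_{j}(n)$, so the degree-$j$ piece carries the factor $N^{-j}$ matching the claimed expansion $\sum_{j}\gamma(j)N^{-j}$. The constants are
\[
\gamma(j)=\lim_{\varepsilon\to0+}\Bigl\{\sum_{n\in\mathbb{Z}^{2}-\{0\}}\widehat{\varphi}(\varepsilon n)\,\mathcal{A}_{j}(n)\Bigr\},
\]
and the remainder term is $R(w,N)/N^{w+1}$ with $R(w,N)$ assembled from the multiplier-regularized sum of $E_{w}(Nn)$. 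To bound $R(w,N)$ by a constant independent of $N$, I would use that $E_{w}(Nn)=O(N^{-w-1}|n|^{-w-1})$ and that for $w$ large enough the sum $\sum_{n\neq0}|n|^{-w-1}$ over $\mathbb{Z}^{2}$ converges absolutely, whence the $\varepsilon\to0$ limit is uniform in $N$ and bounded. A small amount of care is needed to pass from the smooth normalized-angle sum on the left to the sharp-cutoff sum $\sum_{N^{-1}n\in P}g$ in the statement: the difference involves only edge and vertex lattice points, whose count grows linearly in $N$ along each edge and whose aggregate contribution reorganizes into the same expansion via the one-dimensional Theorem 1 applied edge by edge.

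\emph{The hard part} will be justifying the existence and finiteness of the limits defining $\gamma(j)$ for the low-order terms where $j\le 2$, since those series are not absolutely convergent and survive only because the multiplier $\widehat{\varphi}(\varepsilon n)$ regularizes them; this is precisely the delicate point flagged in the introduction. I would handle it by exploiting the homogeneity and parity of $\mathcal{A}_{j}$: grouping $n$ and $-n$ kills the odd-$j$ contributions outright, while for even $j$ the angular average of $\mathcal{A}_{j}(n)$ over directions, combined with the integer-point structure, lets one identify the regularized sum with a convergent Epstein-type zeta value or an explicit arithmetic constant. The second genuine obstacle is ensuring the interchange of the finite-$w$ summation with the $\varepsilon\to0$ limit and the uniformity of the remainder bound in $N$ simultaneously; this forces $w$ to be taken large enough that $E_{w}$ is absolutely summable before one extracts the individual $\gamma(j)$, and then one argues that the resulting $\gamma(j)$ are independent of the chosen truncation order $w$.
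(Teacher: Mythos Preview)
Your high-level reduction is exactly the paper's proof of Theorem~2: one first establishes the expansion for the \emph{weighted} sum $N^{-2}\sum\omega_{P}(N^{-1}n)g(N^{-1}n)$ (this is Theorem~3), and then passes to the unweighted sum by correcting along the boundary, applying the one-dimensional Euler--Maclaurin formula edge by edge and absorbing the vertex contributions as an $O(N^{-2})$ term. So the architecture of your argument is right.

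Where you diverge from the paper is in how you obtain the Fourier expansion of $\widehat{g\chi_{P}}$. You iterate the divergence theorem, following the heuristic ``Claim'' from the introduction. The paper explicitly does \emph{not} make that route rigorous in dimension~2; instead it triangulates $P$, maps each triangle affinely to the standard simplex $\{0\le x,y,x+y\le 1\}$, and then applies the one-dimensional Lemma~1 twice (Lemma~2) to get a fully explicit expansion whose terms are rational functions like $(am+bn)^{-h-1}(cm+dn)^{-k-1}$ together with remainders controlled in $\ell^{2}$ by Bessel's inequality. This buys the paper two things your outline lacks. First, the delicate limits you call the ``hard part'' are computed concretely in Lemma~5: the regularized double sum $\sum\widehat{\varphi}(\varepsilon n)(am+bn)^{-h-1}(cm+dn)^{-k-1}$ is identified, via Poisson summation, with a \emph{finite} sum of products of Bernoulli polynomials over lattice points in a parallelogram---no Epstein zeta values are needed. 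Second, the remainder is not treated as a bare $O(|n|^{-w-1})$; Lemma~6 shows that the non-absolutely-convergent remainder pieces (those carrying a single factor $(am+bn)^{-1}$ or $(cm+dn)^{-1}$) are bounded uniformly in $\varepsilon$ and $N$ by another application of Poisson summation, using the specific $\ell^{2}$ structure from Lemma~2. Your workaround of taking $w$ large and then truncating back does recover the statement, but only after you know the individual $\gamma(j)$ are finite---and for $j=2$ that is precisely the point where your sketch is vaguest.

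One small caution: the reduction to $N=1$ by rescaling $P\mapsto NP$ does not work as cleanly here as in one variable, because the coefficients depend on the polygon in a way that does not factor transparently under dilation; the paper keeps $N$ throughout and uses the homogeneity $\mathcal{A}_{j}(Nn)=N^{-j}\mathcal{A}_{j}(n)$ directly, which is also what you do in your second paragraph---so you should drop the $N=1$ remark.
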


This theorem is already known, and not only in dimension two. When $g\left(
x\right)  =1$ it is a celebrated result of Ehrhart. See Chapter 3 in
\cite{B-R}. When $g\left(  x\right)  $ is not constant, see \cite{B-V},
\cite{G-S}, \cite{K-S-W1},\cite{K-S-W2}, \cite{K-S-W3} and Chapter 12 in
\cite{B-R}. An alternative proof follows from the next theorem, the main
result of this paper, which compares the integral $%
{\displaystyle\int_{P}}
g\left(  x\right)  dx$ with the weighted Riemann sum $\frac{1}{N^{2}}%
{\displaystyle\sum_{n\in\mathbb{Z}^{2}}}
\omega_{P}\left(  N^{-1}n\right)  g\left(  N^{-1}n\right)  $.

\bigskip

\begin{theorem}    \label{Theorem 3}
If $P$ is an integer polygon in the Cartesian plane, and if
$g\left(  x\right)  $ is a smooth function, then there exist computable
constants $\left\{  \delta\left(  j\right)  \right\}  _{j=1}^{+\infty}$ with
the property that for every positive integers $w$ and $N$ there exists
$R\left(  w,N\right)  $ such that
\[
N^{-2}%
{\displaystyle\sum_{n\in\mathbb{Z}^{2}}}
\omega_{P}\left(  N^{-1}n\right)  g\left(  N^{-1}n\right)  =%
{\displaystyle\int_{P}}
g\left(  x\right)  dx+%
{\displaystyle\sum_{j=1}^{w}}
\dfrac{\delta\left(  j\right)  }{N^{2j}}+\dfrac{R\left(  w,N\right)
}{N^{2w+2}}.
\]
The constants $\delta\left(  j\right)  $ depend on the derivatives
$\partial^{\left\vert \alpha\right\vert }g\left(  x\right)  /\partial
x^{\alpha}$ of order $2j-2$ and $2j-1$ evaluated at the boundary of the
polygon. The remainder $R\left(  w,N\right)  $ depends on the derivatives of
order $2w+1$ and $2w+2$\ inside the polygon. Moreover, for every $w$ there
exists $C$ such that $\left\vert R\left(  w,N\right)  \right\vert \leq C$ for
every $N$.
\end{theorem}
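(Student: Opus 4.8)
The plan is to turn the identity that the introduction already derives by mollification and Poisson summation,
\[
N^{-2}\sum_{n\in\mathbb{Z}^{2}}\omega_{P}\left(N^{-1}n\right)g\left(N^{-1}n\right)=\int_{P}g\left(x\right)dx+\lim_{\varepsilon\rightarrow0+}\sum_{n\in\mathbb{Z}^{2}-\left\{0\right\}}\widehat{\varphi}\left(\varepsilon n\right)\widehat{g\chi_{P}}\left(Nn\right),
\]
into a genuine asymptotic expansion, by inserting a sufficiently precise, uniform expansion of $\widehat{g\chi_{P}}\left(Nn\right)$ and then controlling, degree by degree, the resulting regularized lattice sums. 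The entire content of the theorem is thus reduced to (i) an effective form of the Claim in dimension two and (ii) the summability of each homogeneous piece against the multiplier $\widehat{\varphi}\left(\varepsilon n\right)$.

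For (i) I would not use the abstract divergence-theorem induction sketched after the Claim, but a concrete edge-by-edge argument based on Lemma \ref{Lemma 1}. Applying the divergence theorem converts the area integral into a sum of edge integrals plus a bulk term carrying a factor $|n|^{-2}$ and one extra derivative of $g$. On a fixed edge $F$, with primitive integer direction $u_{F}$ joining two integer vertices, two regimes occur. If $n\cdot u_{F}=0$ then $n$ is a normal direction, the phase $e^{-2\pi in\cdot x}$ is identically $1$ along $F$ because both $n$ and the vertices are integral, and the edge integral is simply $\int_{F}g\,d\sigma$; combined with the prefactor $\mathbf{n}\left(F\right)\cdot n/\left(-2\pi i|n|^{2}\right)$ this is a clean contribution homogeneous of degree $-1$. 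If $n\cdot u_{F}\neq0$, I parametrize $F$ linearly and apply Lemma \ref{Lemma 1} with frequency $y=n\cdot u_{F}$, an integer, so that the endpoint phases equal $1$ and the expansion produces vertex terms carrying factors $\left(n\cdot u_{F}\right)^{-j-1}$ plus an exact integral remainder. Iterating the divergence theorem on the successive bulk terms --- each pass peeling off new edge-and-vertex contributions while lowering the homogeneity of the remaining bulk integral by one power of $|n|$ and raising the order of the derivative of $g$ by one --- and then collecting across all passes the pieces of equal homogeneity produces exactly the functions $\mathcal{A}_{j}(n)$: homogeneous of degree $-j$, even for even $j$ and odd for odd $j$. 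I would carry the iteration to order $2w+2$, so that the final bulk term is bounded by $C|n|^{-(2w+2)}$ and involves only derivatives of $g$ of orders $2w+1$ and $2w+2$ inside $P$.

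Assembling this, I substitute $\widehat{g\chi_{P}}(Nn)=\sum_{j=1}^{2w+1}\mathcal{A}_{j}(Nn)+\varrho(Nn)$ into the Poisson identity and use the homogeneity $\mathcal{A}_{j}(Nn)=N^{-j}\mathcal{A}_{j}(n)$ to factor out the powers of $N$. The symmetry $\mathcal{A}_{j}(-n)=(-1)^{j}\mathcal{A}_{j}(n)$ makes every odd-degree piece --- including the critical degree $-(2w+1)$ --- cancel in pairs $\pm n$, while each even-degree piece yields a constant $\delta(j)=\lim_{\varepsilon\rightarrow0+}\sum_{n\neq0}\widehat{\varphi}(\varepsilon n)\mathcal{A}_{2j}(n)$ and hence a term $\delta(j)/N^{2j}$. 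The stated dependence of $\delta(j)$ on the derivatives of $g$ of orders $2j-2$ and $2j-1$ on $\partial P$ is read off from which pass of Lemma \ref{Lemma 1} the corresponding vertex terms come from; the remainder $\varrho(Nn)$, being $O\bigl((N|n|)^{-(2w+2)}\bigr)$, gives an absolutely convergent lattice sum, so the $\varepsilon\rightarrow0+$ limit there is trivial and produces $R(w,N)/N^{2w+2}$ with $|R(w,N)|\le C$ uniformly in $N$.

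\textbf{Main obstacle.} The delicate step is the existence and finiteness of the limits defining $\delta(j)$. For $2j\le d=2$, that is for $j=1$, the piece $\mathcal{A}_{2}(n)$ is only $O(|n|^{-2})$, so $\sum_{n\neq0}\mathcal{A}_{2}(n)$ is \emph{not} absolutely convergent in the plane; worse, the individual edge contributions to $\mathcal{A}_{2}(n)$ carry factors $(n\cdot u_{F})^{-1}$ that are singular along the rational directions normal to the edges --- precisely the resonant directions treated separately above --- so the Lemma \ref{Lemma 1} remainders are not uniformly small either. The heart of the rigorous two-dimensional proof is therefore to show that the regularized sum $\sum_{n\neq0}\widehat{\varphi}(\varepsilon n)\mathcal{A}_{2}(n)$ converges as $\varepsilon\rightarrow0+$. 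I expect to achieve this by grouping the lattice points according to their direction: along each edge-normal direction $n=k\nu_{F}$ the sub-sum reduces to a one-dimensional sum that is governed, via $\int_{F}g\,d\sigma$, by the one-dimensional Euler--Maclaurin mechanism of Theorem \ref{Theorem 1}, whereas over the generic directions the angular cancellation inherent in the even homogeneous functions $\mathcal{A}_{2}(n)$ restores summability. Making this cancellation rigorous, in place of the purely formal termwise manipulation indicated in the Conjecture, is the technical crux of Theorem \ref{Theorem 3}.
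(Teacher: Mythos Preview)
Your overall architecture --- Poisson summation, expand $\widehat{g\chi_P}$, factor $N^{-j}$ by homogeneity, kill odd $j$ by parity --- matches the paper's. But the paper does not iterate the divergence theorem on $P$ directly. Instead it uses additivity of the weighted sums to reduce to integer triangles, maps each affinely to the standard simplex $T=\{0\le x,y,x+y\le 1\}$ (Lemma \ref{Lemma 3}), and then applies Lemma \ref{Lemma 1} \emph{twice} (once in $y$, once in $x$) to $\widehat{g\chi_T}(m,n)$. The output (Lemma \ref{Lemma 2}) is not an abstract family $\mathcal{A}_j(n)$ but a completely explicit rational expansion whose generic terms look like $\alpha(h,k)(am+bn)^{-h-1}(cm+dn)^{-k-1}$ with integer $a,b,c,d$ determined by the edges, plus separately treated ``resonant'' terms along the three edge-normal lines.

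This concreteness is what lets the paper close the very gap you flag as the main obstacle. The existence of $\delta(1)$ is \emph{not} obtained by any ``angular cancellation'' argument; it comes from recognizing that $(am+bn)^{-h-1}(cm+dn)^{-k-1}$ is, up to a constant, the $(m,n)$-th Fourier coefficient of the compactly supported function $\mathcal{B}_{h+1,k+1}\bigl(\tfrac{dx-cy}{ad-bc},\tfrac{-bx+ay}{ad-bc}\bigr)$ built from a tensor product of Bernoulli polynomials (Lemma \ref{Lemma 5}). Poisson summation then runs \emph{backwards}: the regularized lattice sum equals a finite sum of values of this bounded function at the integer points of a parallelogram, so the $\varepsilon\to0+$ limit exists and is explicitly computable. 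Your proposal does not contain this idea, and without it the claim that ``angular cancellation inherent in the even homogeneous functions $\mathcal{A}_2(n)$ restores summability'' is not a proof --- generic even degree-$(-2)$ homogeneous functions on $\mathbb{Z}^2\setminus\{0\}$ do not have convergent regularized sums.

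Your remainder analysis has a related oversimplification. The bulk term after $2w+2$ divergence-theorem passes is indeed $O\bigl((N|n|)^{-(2w+2)}\bigr)$ and absolutely summable, but the Lemma \ref{Lemma 1} remainders generated along the way on each edge are not: they have anisotropic decay of the form $A_k(w,m)\,m^{k-w-1}n^{-k-1}$ (see the display for $R(w,m,n)$ in Lemma \ref{Lemma 2}), and for $k=0$ the factor $n^{-1}$ makes the double sum only conditionally convergent. Lemma \ref{Lemma 6} in the paper handles exactly these terms, again by the same Poisson-summation-in-reverse device applied to a function built from $\sum_{m\neq0}A_0(w,Nm)m^{-w-1}e^{2\pi imx}$ times the first Bernoulli polynomial. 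So the ``trivial'' limit you claim for the remainder is in fact another instance of the nontrivial mechanism you are missing for $\delta(1)$.
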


\bigskip

The particular case $g\left(  x\right)  =1$ of this theorem is due to
Macdonald. See Chapter 13 of \cite{B-R}. Compare the statements of Theorem
\ref{Theorem 2} and Theorem \ref{Theorem 3}:   the asymptotic expansion of the
sums without the weights $N^{-2}%
{\displaystyle\sum_{n\in\mathbb{Z}^{2}}}
g\left(  N^{-1}n\right)  $ may contain all powers of $1/N$, while the
expansion of the weighted sums
\[
N^{-2}%
{\displaystyle\sum_{n\in\mathbb{Z}^{2}}}
\omega_{P}\left(  N^{-1}n\right)  g\left(  N^{-1}n\right)
\]
contains only even powers. The unweighted sums approximate the integral $%
{\displaystyle\int_{P}}
g\left(  x\right)  dx$ to an order $1/N$, while the weighted sums give an
approximation to an order $1/N^{2}$. The weighted sums have another advantage.
While the unweighted sum of open and closed polygons are different, the
weighted sums are the same. Moreover, they are additive with respect to the
polygons. If $P$ and $Q$ are integer polygons with disjoint interiors, then
\begin{align*}
&  N^{-2}%
{\displaystyle\sum_{n\in\mathbb{Z}^{2}}}
\omega_{P\cup Q}\left(  N^{-1}n\right)  g\left(  N^{-1}n\right) \\
=  &  N^{-2}%
{\displaystyle\sum_{n\in\mathbb{Z}^{2}}}
\omega_{P}\left(  N^{-1}n\right)  g\left(  N^{-1}n\right)  +N^{-2}%
{\displaystyle\sum_{n\in\mathbb{Z}^{2}}}
\omega_{Q}\left(  N^{-1}n\right)  g\left(  N^{-1}n\right)  .
\end{align*}
In particular, the theorem for triangles implies the theorem for all other
polygons. For all these reasons, the weights $\omega_{P}\left(  N^{-1}%
n\right)  $ are quite natural.

\bigskip

\section{Proofs of Theorem \ref{Theorem 2}   and Theorem \ref{Theorem 3}}

\begin{proof}
[Proof of Theorem \ref{Theorem 2}]Theorem \ref{Theorem 2} is a corollary of
Theorem \ref{Theorem 3} and of the one dimensional Euler-Maclaurin summation
formula. The idea is the following. The difference between weighted sums in
the theorem and unweighted sums is due to the points $N^{-1}n$ on the boundary
of the polygon and, by the one dimensional Euler-Maclaurin summation formula,
the contribution of these points is asymptotically equal to $\vartheta/N+...$,
for some constant $\vartheta$,... Hence, if the asymptotic expansions of the
weighted sums contain only even powers of $1/N$, the expansions of the
unweighted sums may contain also some odd powers. The details of the proof are
as follows. Assume that $P$ is closed, and denote by $\left\{  P_{j}\right\}
$ and $\left\{  L_{j}\right\}  $ the vertices and the sided of $P$, each side
with both vertices included. The difference between weighted and unweighted
Riemann sums is due to the sampling points on the sides and the vertices of
polygon,%
\begin{align*}
&  N^{-2}%
{\displaystyle\sum_{n\in\mathbb{Z}^{2},\ N^{-1}n\in P}}
g\left(  N^{-1}n\right) \\
=  &  N^{-2}%
{\displaystyle\sum_{n\in\mathbb{Z}^{2}}}
\omega_{P}\left(  N^{-1}n\right)  g\left(  N^{-1}n\right)  +\left(  2N\right)
^{-1}%
{\displaystyle\sum_{j}}
\left(  N^{-1}%
{\displaystyle\sum_{n\in\mathbb{Z}^{2},\ N^{-1}n\in L_{j}}}
g\left(  N^{-1}n\right)  \right) \\
&  -N^{-2}%
{\displaystyle\sum_{j}}
\omega_{P}\left(  P_{j}\right)  g\left(  P_{j}\right)  .
\end{align*}
A similar formula holds for an open polygon. By theorem 3,
\[
N^{-2}%
{\displaystyle\sum_{n\in\mathbb{Z}^{2}}}
\omega_{P}\left(  N^{-1}n\right)  g\left(  N^{-1}n\right)  =\alpha
+\dfrac{\beta}{N^{2}}+\dfrac{\gamma}{N^{4}}+...
\]
By the one dimensional Euler-Maclaurin summation formula, for every $j$,
\[
N^{-1}%
{\displaystyle\sum_{n\in\mathbb{Z}^{2},\ N^{-1}n\in L_{j}}}
g\left(  N^{-1}n\right)  =\delta+\dfrac{\varepsilon}{N}+\dfrac{\zeta}{N^{2}%
}+...
\]
Finally,
\[
-N^{-2}%
{\displaystyle\sum_{j}}
\omega_{P}\left(  P_{j}\right)  g\left(  P_{j}\right)  =\dfrac{\eta}{N^{2}}.
\]
Putting together these three asymptotic expansions one obtains the theorem.
\end{proof}

\bigskip

\begin{proof}
[\textbf{Proof of Theorem \ref{Theorem 3}}]The proof is in principle similar
to the one of Theorem \ref{Theorem 1}. Here it is convenient to adopt a more
explicit notation. Instead of  the vector notation  $x$  in $\mathbb{R}^{d}$ and $n$ in
$\mathbb{Z}^{d}$, we write $\left(  x,y\right)  $ in $\mathbb{R}^{2}$ and
$\left(  m,n\right)  $ in $\mathbb{Z}^{2}$. Moreover, since the dependence of
the coefficients $\left\{  \delta\left(  j\right)  \right\}  $ and the
remainder $R\left(  w,N\right)  $ from the polygon $P$ and the function
$g\left(  x,y\right)  $ is more complicated than in one variable, we keep the
parameter $N$. We denote by $\widehat{g\chi_{P}}\left(  m,n\right)  $ the
Fourier transform of $g\left(  x,y\right)  \chi_{P}\left(  x,y\right)  $,
\[
\widehat{g\chi_{P}}\left(  m,n\right)  =%
{\displaystyle\int}
{\displaystyle\int_{\mathbb{R}^{2}}}
g\left(  x,y\right)  \chi_{P}\left(  x,y\right)  e^{-2\pi i\left(
mx+ny\right)  }dxdy.
\]
Moreover, we denote by $\varphi\left(  x,y\right)  $ a smooth radial function
with compact support and integral 1. Then,%
\begin{align*}
&  \underset{\left(  m,n\right)  \in\mathbb{Z}^{2}}{%
{\displaystyle\sum}
{\displaystyle\sum}
}\omega_{P}\left(  N^{-1}m,N^{-1}n\right)  N^{-2}g\left(  N^{-1}%
m,N^{-1}n\right) \\
&  =\lim_{\varepsilon\rightarrow0+}\left\{  \underset{\left(  m,n\right)
\in\mathbb{Z}^{2}}{%
{\displaystyle\sum}
{\displaystyle\sum}
}\varphi_{\varepsilon}\ast\left(  g\chi_{P}\right)  _{N^{-1}}\left(
m,n\right)  \right\}  .
\end{align*}
By the Poisson summation formula,
\[
\underset{\left(  m,n\right)  \in\mathbb{Z}^{2}}{%
{\displaystyle\sum}
{\displaystyle\sum}
}\varphi_{\varepsilon}\ast\left(  g\chi_{P}\right)  _{N^{-1}}\left(
m,n\right)  =\underset{\left(  m,n\right)  \in\mathbb{Z}^{2}}{%
{\displaystyle\sum}
{\displaystyle\sum}
}\widehat{\varphi}\left(  \varepsilon m,\varepsilon n\right)  \widehat
{g\chi_{P}}\left(  Nm,Nn\right)  .
\]
Observe that the application of the Poisson summation formula is legitimate.
The first series is finite since both $g\left(  x,y\right)  \chi_{P}\left(
x,y\right)  $ and $\varphi\left(  x,y\right)  $ have compact support, and the
second series is absolutely convergent since $\widehat{g\chi_{P}}\left(
m,n\right)  $ is bounded and $\widehat{\varphi}\left(  m,n\right)  $ has fast
decay at infinity. Hence,
\begin{align*}
&  \underset{\left(  m,n\right)  \in\mathbb{Z}^{2}}{%
{\displaystyle\sum}
{\displaystyle\sum}
}\omega_{P}\left(  N^{-1}m,N^{-1}n\right)  N^{-2}g\left(  N^{-1}%
m,N^{-1}n\right) \\
=  &
{\displaystyle\int}
{\displaystyle\int_{P}}
g\left(  x,y\right)  dxdy+\lim_{\varepsilon\rightarrow0}\left\{
\underset{\left(  m,n\right)  \neq\left(  0,0\right)  }{%
{\displaystyle\sum}
{\displaystyle\sum}
}\widehat{\varphi}\left(  \varepsilon m,\varepsilon n\right)  \widehat
{g\chi_{P}}\left(  Nm,Nn\right)  \right\}  .
\end{align*}
The Euler-Maclaurin summation formula is a quite straightforward consequence
of this version of the Poisson summation formula, and of an asymptotic
expansion of the Fourier transform $\widehat{g\chi_{P}}\left(  Nm,Nn\right)
$. Observe that both Fourier transforms and weighted Riemann sums are additive
with respect to integer polygons with disjoint interiors. Since polygons with
more than three sides have at least two ears, see \cite{M}, or simply have
interior diagonals, an integer polygon can be decomposed into integer
triangles, and since with an affine change of variables one can transform a
triangle into the simplex $\left\{  0\leq x,y,x+y\leq1\right\}  $, it suffices
to compute the asymptotic expansion of the Fourier transform of this simplex.
An iterated application of Lemma \ref{Lemma 1} gives an asymptotic expansion
of the Fourier transform of a smooth function restricted to the simplex
$\left\{  0\leq x,y,x+y\leq1\right\}  $. In this asymptotic expansion there is
a difference between directions orthogonal to the sides of the simplex, and
generic directions non orthogonal to the sides. Then the proof of Theorem
\ref{Theorem 3} follows from a few technical lemmas.
\end{proof}

\bigskip


We now proceed to develop some technical lemmas which will allow us to prove very precise EM-formulas
for polygons, as well as quadrature-type formulas for polygons.   We begin with the simplest right triangle in the plane.

\begin{lemma}
\textbf{\label{Lemma 2}}Assume that the function $g\left(  x,y\right)  $ is
smooth and let
\[
T=\left\{  0\leq x,y,x+y\leq1\right\}  .
\]

\noindent(1) There exist constants $\left\{  \alpha\left(  j\right)  \right\}
$, $\left\{  \beta\left(  j\right)  \right\}  $, $\left\{  \gamma\left(
j\right)  \right\}  $, such that for every non zero integer $n$ and every
$w$,
\[
\widehat{g\chi_{T}}\left(  n,0\right)  =%
{\displaystyle\int_{0}^{1}}
\left(
{\displaystyle\int_{0}^{1-x}}
g\left(  x,y\right)  dy\right)  e^{-2\pi inx}dx=%
{\displaystyle\sum_{j=0}^{w}}
\frac{\alpha\left(  j\right)  }{n^{j+1}}+\frac{C_{1}\left(  w,n\right)
}{n^{w+1}},
\]

\[
\widehat{g\chi_{T}}\left(  0,n\right)  =%
{\displaystyle\int_{0}^{1}}
\left(
{\displaystyle\int_{0}^{1-y}}
g\left(  x,y\right)  dx\right)  e^{-2\pi iny}dy=%
{\displaystyle\sum_{j=0}^{w}}
\frac{\beta\left(  j\right)  }{n^{j+1}}+\frac{C_{2}\left(  w,n\right)
}{n^{w+1}},
\]

\[
\widehat{g\chi_{T}}\left(  n,n\right)  =%
{\displaystyle\int_{0}^{1}}
\left(
{\displaystyle\int_{0}^{1-x}}
g\left(  x,y\right)  e^{-2\pi iny}dy\right)  e^{-2\pi inx}dx=%
{\displaystyle\sum_{j=0}^{w}}
\frac{\gamma\left(  j\right)  }{n^{j+1}}+\frac{C_{3}\left(  w,n\right)
}{n^{w+1}}.
\]
The constants $\alpha\left(  j\right)  $, $\beta\left(  j\right)  $,
$\gamma\left(  j\right)  $ depend on the partial derivatives of $g\left(
x,y\right)  $ of order $j-1$ and $j$ evaluated on the boundary of the triangle
with vertices $\left(  0,0\right)  $, $\left(  1,0\right)  $, $\left(
0,1\right)  $, and the remainders $C_{1}\left(  w,n\right)  $, $C_{2}\left(
w,n\right)  $ , $C_{3}\left(  w,n\right)  $ depend on the partial derivatives
of $g\left(  x,y\right)  $ of order $w$ and $w+1$ in this triangle. Moreover,
for some constant $C$,
\[%
{\displaystyle\sum_{n\in\mathbb{Z-}\left\{  0\right\}  }}
\left\vert C_{1}\left(  w,n\right)  \right\vert ^{2}\leq C,\ \ \
{\displaystyle\sum_{n\in\mathbb{Z-}\left\{  0\right\}  }}
\left\vert C_{2}\left(  w,n\right)  \right\vert ^{2}\leq C,\ \ \
{\displaystyle\sum_{n\in\mathbb{Z-}\left\{  0\right\}  }}
\left\vert C_{3}\left(  w,n\right)  \right\vert ^{2}\leq C.
\]

\noindent(2) There exist constants $\left\{  \alpha\left(  h,k\right)
\right\}  $ and $\left\{  \beta\left(  h,k\right)  \right\}  $, such that for
every non zero integers $m$ and $n$, with $m\neq n$, and every $w$,
\begin{align*}
\widehat{g\chi_{T}}\left(  m,n\right)   &  =%
{\displaystyle\int_{0}^{1}}
\left(
{\displaystyle\int_{0}^{1-x}}
g\left(  x,y\right)  e^{-2\pi iny}dy\right)  e^{-2\pi imx}dx\\
&  =%
{\displaystyle\sum_{j=0}^{w}}
\left(
{\displaystyle\sum_{h+k=j}}
\frac{\alpha\left(  h,k\right)  }{m^{h+1}n^{k+1}}+%
{\displaystyle\sum_{h+k=j}}
\frac{\beta\left(  h,k\right)  }{\left(  m-n\right)  ^{h+1}n^{k+1}}\right)
+R\left(  w,m,n\right)  .
\end{align*}
The constants $\alpha\left(  h,k\right)  $ and $\beta\left(  h,k\right)  $
depend on the partial derivatives of $g\left(  x,y\right)  $ of order $h+k$
evaluated at the points $\left(  0,0\right)  $, $\left(  1,0\right)  $,
$\left(  0,1\right)  $. The remainder $R\left(  w,m,n\right)  $ has the form
\begin{align*}
R\left(  w,m,n\right)   &  =%
{\displaystyle\sum_{k=0}^{w}}
\frac{A_{k}\left(  w,m\right)  }{m^{w-k+1}n^{k+1}}+%
{\displaystyle\sum_{k=0}^{w}}
\frac{B_{k}\left(  w,m-n\right)  }{\left(  m-n\right)  ^{w-k+1}n^{k+1}}\\
&  +\frac{\Gamma_{1}\left(  w,n\right)  }{mn^{w+1}}+\frac{\Gamma_{2}\left(
w,m-n\right)  }{mn^{w+1}}+\frac{\Gamma_{3}\left(  w,m,n\right)  }{mn^{w+1}}.
\end{align*}
The functions $A_{k}\left(  w,m\right)  $, $B_{k}\left(  w,m-n\right)  $,
$\Gamma_{1}\left(  w,n\right)  $, $\Gamma_{2}\left(  w,m-n\right)  $,
$\Gamma_{3}\left(  w,m,n\right)  $, depend on the partial derivatives of
$g\left(  x,y\right)  $ of order $w+1$ in the triangle with vertices $\left(
0,0\right)  $, $\left(  1,0\right)  $, $\left(  0,1\right)  $. Moreover, for
some constant $C$,
\begin{gather*}%
{\displaystyle\sum_{m\in\mathbb{Z-}\left\{  0\right\}  }}
\left\vert A_{k}\left(  w,m\right)  \right\vert ^{2}\leq C,\ \ \
{\displaystyle\sum_{n\in\mathbb{Z-}\left\{  0\right\}  }}
\left\vert B_{k}\left(  w,n\right)  \right\vert ^{2}\leq C,\\%
{\displaystyle\sum_{n\in\mathbb{Z-}\left\{  0\right\}  }}
\left\vert \Gamma_{1}\left(  w,n\right)  \right\vert ^{2}\leq C,\ \ \
{\displaystyle\sum_{n\in\mathbb{Z-}\left\{  0\right\}  }}
\left\vert \Gamma_{2}\left(  w,n\right)  \right\vert ^{2}\leq C,\\%
{\displaystyle\sum_{m\in\mathbb{Z-}\left\{  0\right\}  }}
{\displaystyle\sum_{n\in\mathbb{Z-}\left\{  0\right\}  }}
\left\vert \Gamma_{3}\left(  w,m,n\right)  \right\vert ^{2}\leq C.
\end{gather*}
\end{lemma}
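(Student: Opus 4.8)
The plan is to derive every expansion by iterating the one–dimensional Lemma \ref{Lemma 1}, after writing the double Fourier transform as the iterated integral
\[
\widehat{g\chi_{T}}(m,n)=\int_{0}^{1}\left(\int_{0}^{1-x}g(x,y)\,e^{-2\pi iny}\,dy\right)e^{-2\pi imx}\,dx .
\]
The only arithmetic input is that $m$, $n$ and $m-n$ are integers, so that all the endpoint phases $e^{-2\pi in\cdot 0}$, $e^{-2\pi in}$, $e^{-2\pi im}$ and $e^{-2\pi i(m-n)}$ equal $1$; this is exactly where the hypothesis that $T$ is an integer triangle enters. Every $L^{2}$ bound on a remainder will be an instance of Bessel's inequality: a term $\int_{0}^{1}\Phi(t)\,e^{-2\pi i\ell t}\,dt$ with $\Phi$ smooth is the $\ell$-th Fourier coefficient of a square–integrable function, so its squares sum over $\ell$ to at most $\|\Phi\|_{L^{2}}^{2}$, and likewise in two variables for the fully two–dimensional remainder.

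For part (1) I treat the three directions $(n,0)$, $(0,n)$, $(n,n)$, each orthogonal to one side of $T$. In the first two the phase does not oscillate in the side direction, so I integrate out that variable, setting $G(x)=\int_{0}^{1-x}g(x,y)\,dy$ and $H(y)=\int_{0}^{1-y}g(x,y)\,dx$, which are smooth on $[0,1]$, and apply Lemma \ref{Lemma 1} once to $\int_{0}^{1}G(x)e^{-2\pi inx}dx$ and $\int_{0}^{1}H(y)e^{-2\pi iny}dy$. For $(n,n)$ I first change variables by $u=x+y$, $v=x$ (Jacobian $1$), which turns $T$ into $\{0\le v\le u\le 1\}$ and the phase into $e^{-2\pi inu}$; setting $K(u)=\int_{0}^{u}g(v,u-v)\,dv$ reduces matters to $\int_{0}^{1}K(u)e^{-2\pi inu}du$, again a single application of Lemma \ref{Lemma 1}. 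In each case the integer value of the frequency trivializes the endpoint exponentials, so the coefficients $\alpha(j),\beta(j),\gamma(j)$ emerge as multiples of the differences of $j$-th derivatives of $G,H,K$ at $0$ and $1$ — which by the Leibniz rule and the fundamental theorem of calculus involve exactly the boundary derivatives of $g$ of orders $j-1$ and $j$ — while the remainders $C_{i}(w,n)$ are the Fourier coefficients of $G^{(w+1)},H^{(w+1)},K^{(w+1)}$, so Bessel's inequality yields the stated bounds.

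For the generic case (2) I apply Lemma \ref{Lemma 1} twice. The inner integration by parts in $y$ on $[0,1-x]$ with frequency $n$ produces a contribution from the lower edge $y=0$, a contribution from the hypotenuse $y=1-x$, and a $y$-remainder carrying the factor $(2\pi in)^{-w-1}$. Here the identity $e^{-2\pi in(1-x)}=e^{2\pi inx}$ (valid since $n\in\mathbb{Z}$) is decisive: it converts the hypotenuse phase into frequency $m-n$ in the variable $x$. A second application of Lemma \ref{Lemma 1} in $x$ then treats the lower–edge terms with frequency $m$ and the hypotenuse terms with frequency $m-n$; expanding the $k$-th inner term to order $w-k$ in $x$ produces the main sums $\sum_{h+k=j}\alpha(h,k)m^{-h-1}n^{-k-1}$ and $\sum_{h+k=j}\beta(h,k)(m-n)^{-h-1}n^{-k-1}$, with $\alpha,\beta$ depending on the order $h+k$ derivatives of $g$ at the vertices, plus the leftover remainders $A_{k}(w,m)/m^{w-k+1}$ and $B_{k}(w,m-n)/(m-n)^{w-k+1}$. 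Finally the $y$-remainder is integrated by parts once in $x$ with frequency $m$: its boundary terms at $x=0$ and along the hypotenuse give $\Gamma_{1}(w,n)$ and $\Gamma_{2}(w,m-n)$, and its interior term gives the genuinely two–dimensional $\Gamma_{3}(w,m,n)$, all carrying the common factor $(mn^{w+1})^{-1}$.

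The main obstacle is not any single estimate but the systematic bookkeeping: one must record to which of the three sides and to which of the three frequencies $m$, $n$, $m-n$ each term is attached, and must choose the depth $w-k$ of the second expansion so that the pairs $(h,k)$ with $h+k\le w$ are precisely the main terms. The exclusion $m\neq n$ (together with $m,n\neq0$) is forced because Lemma \ref{Lemma 1} requires a nonzero frequency: the hypotenuse integration by parts uses $m-n$, which vanishes exactly in the resonant direction $(n,n)$ already handled in part (1). Once the terms are correctly sorted, the $\ell^{2}$ bounds on $A_{k},B_{k},\Gamma_{1},\Gamma_{2}$ follow from the one–dimensional Bessel inequality applied to the relevant boundary restrictions of $\partial^{w+1}g$, and the bound on $\Gamma_{3}$ from the two–dimensional Bessel inequality for $\partial_{x}\partial_{y}^{w+1}g$ extended by zero off $T$.
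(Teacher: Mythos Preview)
Your proposal is correct and follows essentially the same approach as the paper: reduce part~(1) to a single application of Lemma~\ref{Lemma 1} after integrating out (or changing variables along) the side orthogonal to the frequency, and handle part~(2) by applying Lemma~\ref{Lemma 1} first in $y$, then in $x$ to the edge and hypotenuse terms (with the key observation $e^{-2\pi in(1-x)}=e^{2\pi inx}$ producing the frequency $m-n$), and finally integrating the $y$-remainder by parts once in $x$ to obtain $\Gamma_{1},\Gamma_{2},\Gamma_{3}$, with all $\ell^{2}$ bounds coming from Bessel's inequality. The bookkeeping you describe, including the depth $w-k$ in the second expansion and the role of the exclusion $m\neq n$, matches the paper exactly.
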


\begin{proof}
(1) is the asymptotic expansion of the Fourier transform in directions
orthogonal to the sides of the simplex. The first two expansions follows
directly from Lemma \ref{Lemma 1}, and the same for the third one, but after a
change of variables. Let us consider this last one,
\begin{align*}
\widehat{g\chi_{T}}\left(  n,n\right)   &  =%
{\displaystyle\int_{0}^{1}}
\left(
{\displaystyle\int_{0}^{1-x}}
g\left(  x,y\right)  e^{-2\pi iny}dy\right)  e^{-2\pi inx}dx\\
&  =%
{\displaystyle\int_{0}^{1}}
\left(
{\displaystyle\int_{0}^{t}}
g\left(  s,t-s\right)  ds\right)  e^{-2\pi int}dt.
\end{align*}
The constants $\gamma\left(  j\right)  $ and the remainder $C_{3}\left(
w,n\right)  $ in the asymptotic expansion of this integral can be written
explicitly in terms of the function
\[
G\left(  t\right)  =%
{\displaystyle\int_{0}^{t}}
g\left(  s,t-s\right)  ds.
\]
Indeed, by Lemma \ref{Lemma 1},
\begin{align*}
&
{\displaystyle\int_{0}^{1}}
\left(
{\displaystyle\int_{0}^{t}}
g\left(  s,t-s\right)  ds\right)  e^{-2\pi int}dt\\
=  &
{\displaystyle\sum_{j=0}^{w}}
\left(  2\pi in\right)  ^{-j-1}\left(  G^{\left(  j\right)  }\left(  0\right)
-G^{\left(  j\right)  }\left(  1\right)  \right)  +\left(  2\pi in\right)
^{-w-1}%
{\displaystyle\int_{0}^{1}}
G^{\left(  w+1\right)  }\left(  t\right)  e^{-2\pi int}dt.
\end{align*}
One has
\begin{align*}
\dfrac{d^{j}}{dt^{j}}G\left(  t\right)   &  =\dfrac{d^{j}}{dt^{j}}\left(
{\displaystyle\int_{0}^{t}}
g\left(  s,t-s\right)  ds\right) \\
&  =\dfrac{d^{j-1}}{dt^{j-1}}\left(  g\left(  t,0\right)  +%
{\displaystyle\int_{0}^{t}}
\dfrac{\partial}{\partial y}g\left(  s,t-s\right)  ds\right) \\
&  =\dfrac{d^{j-2}}{dt^{j-2}}\left(  \dfrac{\partial}{\partial x}g\left(
t,0\right)  +\dfrac{\partial}{\partial y}g\left(  t,0\right)  +%
{\displaystyle\int_{0}^{t}}
\dfrac{\partial^{2}}{\partial y^{2}}g\left(  s,t-s\right)  ds\right)  =...
\end{align*}
Hence $\gamma\left(  j\right)  $ is a sum of derivatives of $g\left(
x,y\right)  $ of order $j-1$ evaluated at the points $\left(  0,0\right)  $
and $\left(  1,0\right)  $, and an integral of derivatives of $g\left(
x,y\right)  $ of order $j$ along the side between $\left(  1,0\right)  $ and
$\left(  0,1\right)  $. Similarly, the remainder $C_{3}\left(  w,n\right)  $
depend on the partial derivatives of $g\left(  x,y\right)  $ of order $w$ and
$w+1$ in the triangle with vertices $\left(  0,0\right)  $, $\left(
1,0\right)  $, $\left(  0,1\right)  $. Finally, by Bessel's inequality,%
\[%
{\displaystyle\sum_{w\in\mathbb{Z-}\left\{  0\right\}  }}
\left\vert C_{3}\left(  w,n\right)  \right\vert ^{2}\leq\left(  2\pi\right)
^{-2w-2}%
{\displaystyle\int_{0}^{1}}
\left\vert G^{\left(  w+1\right)  }\left(  t\right)  \right\vert ^{2}dt.
\]
(2) is the asymptotic expansion of the Fourier transform in generic directions
non orthogonal to the sides of the simplex. By Lemma \ref{Lemma 1}, for every
non zero integers $m$ and $n$, with $m\neq n$,
\begin{align*}
&
{\displaystyle\int_{0}^{1}}
\left(
{\displaystyle\int_{0}^{1-x}}
g\left(  x,y\right)  e^{-2\pi iny}dy\right)  e^{-2\pi imx}dx\\
=  &
{\displaystyle\sum_{k=0}^{w}}
\left(  2\pi in\right)  ^{-k-1}%
{\displaystyle\int_{0}^{1}}
\dfrac{\partial^{k}}{\partial y^{k}}g\left(  x,0\right)  e^{-2\pi imx}dx\\
&  -%
{\displaystyle\sum_{k=0}^{w}}
\left(  2\pi in\right)  ^{-k-1}%
{\displaystyle\int_{0}^{1}}
\dfrac{\partial^{k}}{\partial y^{k}}g\left(  x,1-x\right)  e^{-2\pi i\left(
m-n\right)  x}dx\\
&  +\left(  2\pi in\right)  ^{-w-1}%
{\displaystyle\int_{0}^{1}}
\left(
{\displaystyle\int_{0}^{1-x}}
\dfrac{\partial^{w+1}}{\partial y^{w+1}}g\left(  x,y\right)  e^{-2\pi
iny}dy\right)  e^{-2\pi imx}dx.
\end{align*}
Again by Lemma \ref{Lemma 1}, the first sum is
\begin{align*}
&
{\displaystyle\sum_{k=0}^{w}}
\left(  2\pi in\right)  ^{-k-1}%
{\displaystyle\int_{0}^{1}}
\dfrac{\partial^{k}}{\partial y^{k}}g\left(  x,0\right)  e^{-2\pi imx}dx\\
=  &
{\displaystyle\sum_{k=0}^{w}}
{\displaystyle\sum_{h=0}^{w-k}}
\left(  2\pi im\right)  ^{-h-1}\left(  2\pi in\right)  ^{-k-1}\left(
\dfrac{\partial^{h+k}}{\partial x^{h}\partial y^{k}}g\left(  0,0\right)
-\dfrac{\partial^{h+k}}{\partial x^{h}\partial y^{k}}g\left(  1,0\right)
\right) \\
&  +%
{\displaystyle\sum_{k=0}^{w}}
\left(  2\pi im\right)  ^{k-w-1}\left(  2\pi in\right)  ^{-k-1}%
{\displaystyle\int_{0}^{1}}
\dfrac{\partial^{w+1}}{\partial x^{w-k+1}\partial y^{k}}g\left(  x,0\right)
e^{-2\pi imx}dx.
\end{align*}
The terms in the double sum define $\alpha\left(  h,k\right)  m^{-h-1}%
n^{-k-1}$. The terms in the last sum define the remainders $A_{k}\left(
w,m\right)  m^{k-w-1}n^{-k-1}$ and, by Bessel's inequality,
\[%
{\displaystyle\sum_{m\in\mathbb{Z-}\left\{  0\right\}  }}
\left\vert A_{k}\left(  w,m\right)  \right\vert ^{2}\leq\left(  2\pi\right)
^{-2w-4}%
{\displaystyle\int_{0}^{1}}
\left\vert \dfrac{\partial^{w+1}}{\partial x^{w-k+1}\partial y^{k}}g\left(
x,0\right)  \right\vert ^{2}dx.
\]
Similarly,
\begin{align*}
&  -%
{\displaystyle\sum_{k=0}^{w}}
\left(  2\pi in\right)  ^{-k-1}%
{\displaystyle\int_{0}^{1}}
\dfrac{\partial^{k}}{\partial y^{k}}g\left(  x,1-x\right)  e^{-2\pi i\left(
m-n\right)  x}dx\\
=  &
{\displaystyle\sum_{k=0}^{w}}
{\displaystyle\sum_{h=0}^{w-k}}
\dfrac{\left(  2\pi i\right)  ^{-w-2}}{\left(  m-n\right)  ^{w+1-k}n^{k+1}}\\
&  \times\left(  \left.  \dfrac{\partial^{h}}{\partial x^{h}}\left(
\dfrac{\partial^{k}}{\partial y^{k}}g\left(  x,1-x\right)  \right)
\right\vert _{x=1}-\left.  \dfrac{\partial^{h}}{\partial x^{h}}\left(
\dfrac{\partial^{k}}{\partial y^{k}}g\left(  x,1-x\right)  \right)
\right\vert _{x=0}\right) \\
&  -%
{\displaystyle\sum_{k=0}^{w}}
\dfrac{\left(  2\pi i\right)  ^{-w-2}}{\left(  m-n\right)  ^{w+1-k}n^{k+1}}%
{\displaystyle\int_{0}^{1}}
\dfrac{\partial^{w-k+1}}{\partial x^{w-k+1}}\left(  \dfrac{\partial^{k}%
}{\partial y^{k}}g\left(  x,1-x\right)  \right)  e^{-2\pi i\left(  m-n\right)
x}dx.
\end{align*}
The terms in the double sum define $\beta\left(  h,k\right)  \left(
m-n\right)  ^{-h-1}n^{-k-1}$. The terms in the last sum define the remainders
$B_{k}\left(  w,m-n\right)  \left(  m-n\right)  ^{k-w-1}n^{-k-1}$ and, by
Bessel's inequality,
\[%
{\displaystyle\sum_{n\neq0}}
\left\vert B_{k}\left(  w,n\right)  \right\vert ^{2}\leq\left(  2\pi\right)
^{-2w-4}%
{\displaystyle\int_{0}^{1}}
\left\vert \dfrac{\partial^{w-k+1}}{\partial x^{w-k+1}}\left(  \dfrac
{\partial^{k}}{\partial y^{k}}g\left(  x,1-x\right)  \right)  \right\vert
^{2}dx.
\]
It remains to consider%
\[
\left(  2\pi in\right)  ^{-w-1}%
{\displaystyle\int_{0}^{1}}
\left(
{\displaystyle\int_{0}^{1-x}}
\dfrac{\partial^{w+1}}{\partial y^{w+1}}g\left(  x,y\right)  e^{-2\pi
iny}dy\right)  e^{-2\pi imx}dx.
\]
Integrating by parts we have%
\begin{align*}
&  \left(  2\pi in\right)  ^{-w-1}%
{\displaystyle\int_{0}^{1}}
\left(
{\displaystyle\int_{0}^{1-x}}
\dfrac{\partial^{w+1}}{\partial y^{w+1}}g\left(  x,y\right)  e^{-2\pi
iny}dy\right)  e^{-2\pi imx}dx\\
=  &  \left(  2\pi im\right)  ^{-1}\left(  2\pi in\right)  ^{-w-1}%
{\displaystyle\int_{0}^{1}}
\dfrac{\partial^{w+1}}{\partial y^{w+1}}g\left(  0,y\right)  e^{-2\pi iny}dy\\
&  +\left(  2\pi im\right)  ^{-1}\left(  2\pi in\right)  ^{-w-1}%
{\displaystyle\int_{0}^{1}}
\dfrac{\partial}{\partial x}\left(
{\displaystyle\int_{0}^{1-x}}
\dfrac{\partial^{w+1}}{\partial y^{w+1}}g\left(  x,y\right)  e^{-2\pi
iny}dy\right)  e^{-2\pi imx}dx\\
=  &  \left(  2\pi im\right)  ^{-1}\left(  2\pi in\right)  ^{-w-1}%
{\displaystyle\int_{0}^{1}}
\dfrac{\partial^{w+1}}{\partial y^{w+1}}g\left(  0,y\right)  e^{-2\pi iny}dy\\
&  -\left(  2\pi im\right)  ^{-1}\left(  2\pi in\right)  ^{-w-1}%
{\displaystyle\int_{0}^{1}}
\dfrac{\partial^{w+1}}{\partial y^{w+1}}g\left(  x,1-x\right)  e^{-2\pi
i\left(  m-n\right)  x}dx\\
&  +\left(  2\pi im\right)  ^{-1}\left(  2\pi in\right)  ^{-w-1}%
{\displaystyle\int_{0}^{1}}
\left(
{\displaystyle\int_{0}^{1-x}}
\dfrac{\partial^{w+2}}{\partial x\partial y^{w+1}}g\left(  x,y\right)
e^{-2\pi iny}dy\right)  e^{-2\pi imx}dx.
\end{align*}
Observe that the integral at the top depends only on $\dfrac{\partial^{w+1}%
}{\partial y^{w+1}}g\left(  x,y\right)  $ in the triangle. In particular, if
this derivative vanishes, then also the sum of the three integrals at the
bottom vanishes. These three integrals define the remainders $\Gamma
_{1}\left(  w,n\right)  m^{-1}n^{-w-1}$, $\Gamma_{2}\left(  w,m-n\right)
m^{-1}n^{-w-1}$, $\Gamma_{3}\left(  w,m,n\right)  w^{-1}n^{-w-1}$. Finally, by
Bessel's inequality,
\begin{align*}%
{\displaystyle\sum_{n\neq0}}
\left\vert \Gamma_{1}\left(  w,n\right)  \right\vert ^{2}  &  \leq\left(
2\pi\right)  ^{-2w-4}%
{\displaystyle\int_{0}^{1}}
\left\vert \dfrac{\partial^{w+1}}{\partial y^{w+1}}g\left(  0,y\right)
\right\vert ^{2}dy,\\%
{\displaystyle\sum_{n\neq0}}
\left\vert \Gamma_{2}\left(  w,n\right)  \right\vert ^{2}  &  \leq\left(
2\pi\right)  ^{-2w-4}%
{\displaystyle\int_{0}^{1}}
\left\vert \dfrac{\partial^{w+1}}{\partial y^{w+1}}g\left(  x,1-x\right)
\right\vert ^{2}dx,\\%
{\displaystyle\sum_{m\neq0}}
{\displaystyle\sum_{n\neq0}}
\left\vert \Gamma_{3}\left(  w,m,n\right)  \right\vert ^{2}  &  \leq\left(
2\pi\right)  ^{-2w-4}%
{\displaystyle\int_{0}^{1}}
\left(
{\displaystyle\int_{0}^{1-x}}
\left\vert \dfrac{\partial^{w+2}}{\partial x\partial y^{w+1}}g\left(
x,y\right)  \right\vert ^{2}dy\right)  dx.
\end{align*}

\end{proof}

\begin{lemma}
\label{Lemma 3}If $P$ is the triangle with vertices $\left(  p,q\right)  $,
$\left(  p+a,q+b\right)  $, $\left(  p+c,q+d\right)  $, and $T$ is the
triangle with vertices $\left(  0,0\right)  $, $\left(  1,0\right)  $,
$\left(  0,1\right)  $, then
\begin{align*}%
{\displaystyle\int}
{\displaystyle\int_{P}}
&  g\left(  x,y\right)  e^{-2\pi i\left(  mx+ny\right)  }dxdy=e^{-2\pi
i\left(  pm+qn\right)  }\left\vert ad-bc\right\vert \\
&  \times%
{\displaystyle\int}
{\displaystyle\int_{T}}
g\left(  p+as+ct,q+bs+dt\right)  e^{-2\pi i\left(  \left(  am+bn\right)
s+\left(  cm+dn\right)  t\right)  }dsdt.
\end{align*}

\end{lemma}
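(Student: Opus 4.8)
The statement is a direct change of variables, so the plan is to realize $P$ as the affine image of the standard simplex $T$ and then invoke the substitution theorem for double integrals. First I would introduce the affine map $\Phi(s,t)=\left(p+as+ct,\ q+bs+dt\right)$ and check that it sends the three vertices $(0,0)$, $(1,0)$, $(0,1)$ of $T$ to the three vertices $(p,q)$, $(p+a,q+b)$, $(p+c,q+d)$ of $P$. Since an affine map is determined by its values at three affinely independent points, and since affine maps carry convex hulls to convex hulls, it follows that $\Phi(T)=P$, and $\Phi$ is the correct substitution to use.

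Next I would record the Jacobian. The linear part of $\Phi$ has the edge vectors $(a,b)$ and $(c,d)$ emanating from $(p,q)$ as its columns, i.e.\ matrix $\begin{pmatrix} a & c \\ b & d \end{pmatrix}$, whose determinant is $ad-bc$. Hence under $(x,y)=\Phi(s,t)$ one has $dx\,dy=\left\vert ad-bc\right\vert\,ds\,dt$, the absolute value being needed because the orientation reverses when $ad-bc<0$.

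Finally I would expand the phase. Writing $x=p+as+ct$ and $y=q+bs+dt$ gives $mx+ny=\left(pm+qn\right)+\left(am+bn\right)s+\left(cm+dn\right)t$, so the exponential factors as $e^{-2\pi i(pm+qn)}\,e^{-2\pi i((am+bn)s+(cm+dn)t)}$. Substituting $\Phi$ into the integral over $P$, pulling the constant factor $e^{-2\pi i(pm+qn)}$ and the Jacobian $\left\vert ad-bc\right\vert$ outside, and rewriting the integral over $T$ in the variables $(s,t)$, yields exactly the claimed identity.

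There is no genuine obstacle here: the lemma is an elementary consequence of the change-of-variables formula for double integrals, and it serves only to transfer the asymptotic expansions of Lemma \ref{Lemma 2}, established for the standard simplex $T$, to an arbitrary integer triangle $P$. The only point requiring mild care is the bookkeeping: correctly identifying the columns of the linear part of $\Phi$ as the edge vectors $(a,b)$ and $(c,d)$, keeping the translation by $(p,q)$ separate so that it becomes the outside phase factor $e^{-2\pi i(pm+qn)}$, and remembering to take the absolute value of the Jacobian.
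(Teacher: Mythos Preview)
Your proposal is correct and is exactly the change of variables the paper has in mind; the paper's own proof is simply the one-line remark ``This follows by a change of variables.'' You have merely spelled out the affine map, its Jacobian, and the phase factorization explicitly, which is precisely what underlies that one line.
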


\begin{proof}
This follows by a change of variables.
\end{proof}

\bigskip

By the above lemmas, if $P$ is the triangle with vertices $\left(  p,q\right)
$, $\left(  p+a,q+b\right)  $, $\left(  p+c,q+d\right)  $, then the asymptotic
expansion of%
\[
\lim_{\varepsilon\rightarrow0+}\left\{  \underset{\left(  m,n\right)
\in\mathbb{Z}^{2}-\left\{  \left(  0,0\right)  \right\}  }{%
{\displaystyle\sum}
{\displaystyle\sum}
}\widehat{\varphi}\left(  \varepsilon m,\varepsilon n\right)  \widehat
{g\chi_{P}}\left(  Nm,Nn\right)  \right\}
\]
is a sum of several terms. By part (1) of Lemma \ref{Lemma 2} there are terms
of the form%
\[
N^{-h-1}\lim_{\varepsilon\rightarrow0+}\left\{  \underset{\left(  m,n\right)
\in\mathbb{Z}^{2}-\left\{  \left(  0,0\right)  \right\}  ,\ cm+dn=0}{%
{\displaystyle\sum}
{\displaystyle\sum}
}\frac{\widehat{\varphi}\left(  \varepsilon m,\varepsilon n\right)  }{\left(
am+bn\right)  ^{h+1}}\right\}  .
\]
By part (2) of Lemma \ref{Lemma 2} there are terms of the form%
\begin{gather*}
N^{-h-k-2}\lim_{\varepsilon\rightarrow0+}\left\{  \underset{am+bn\neq
0,cm+dn\neq0,\ em+fn\neq0}{%
{\displaystyle\sum}
{\displaystyle\sum}
}\frac{\widehat{\varphi}\left(  \varepsilon m,\varepsilon n\right)  }{\left(
am+bn\right)  ^{h+1}\left(  cm+dn\right)  ^{k+1}}\right\} \\
=N^{-h-k-2}\lim_{\varepsilon\rightarrow0+}\left\{  \underset{am+bn\neq
0,\ cm+dn\neq0}{%
{\displaystyle\sum}
{\displaystyle\sum}
}\frac{\widehat{\varphi}\left(  \varepsilon m,\varepsilon n\right)  }{\left(
am+bn\right)  ^{h+1}\left(  cm+dn\right)  ^{k+1}}\right\} \\
-N^{-h-k-2}\lim_{\varepsilon\rightarrow0+}\left\{  \underset{\left(
m,n\right)  \neq\left(  0,0\right)  ,em+fn=0}{%
{\displaystyle\sum}
{\displaystyle\sum}
}\frac{\widehat{\varphi}\left(  \varepsilon m,\varepsilon n\right)  }{\left(
am+bn\right)  ^{h+1}\left(  cm+dn\right)  ^{k+1}}\right\}  .
\end{gather*}
There are also remainder terms of similar forms. $a$, $b$, $c$, $d$, $e$, $f$
are integers, and $am+bn=0$, $cm+dn=0$, $em+fn=0$ are distinct lines. In our
case $e=a-c$ and $f=b-d$. By the homogeneity of these expressions, one can
also assume that $\left(  a,b\right)  =1$, $\left(  c,d\right)  =1$, $\left(
e,f\right)  =1$. Observe the similarity of the above expansions with the
trigonometric expansions of the periodized Bernoulli polynomials:
\[
B_{k}(x-\left[  x\right]  )=\left(  \frac{-1}{2\pi i}   \right)^k
{\displaystyle\sum\limits_{n\in\mathbb{Z-}\left\{  0\right\}  }}
\dfrac{e^{2\pi inx}}{n^{k}}.
\]

\begin{lemma}
\label{Lemma 4}(1) If $h$ is even,%
\[
\underset{\left(  m,n\right)  \neq\left(  0,0\right)  ,\ cm+dn=0}{%
{\displaystyle\sum}
{\displaystyle\sum}
}\frac{\widehat{\varphi}\left(  \varepsilon m,\varepsilon n\right)  }{\left(
am+bn\right)  ^{h+1}}=0.
\]

\end{lemma}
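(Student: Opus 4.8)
The plan is to exploit the fact that the constraint $cm+dn=0$ restricts the summation to a one-dimensional sublattice of $\mathbb{Z}^2$, on which the entire summand becomes an odd function of a single integer parameter. Since we may assume $\gcd(c,d)=1$, the integer solutions of $cm+dn=0$ are precisely the multiples $(m,n)=k(d,-c)$ with $k\in\mathbb{Z}-\{0\}$. Along this line the denominator simplifies, because $am+bn=a(kd)+b(-kc)=k(ad-bc)$; writing $\Delta=ad-bc$ (which is nonzero, since the lines $am+bn=0$ and $cm+dn=0$ are distinct, so $(a,b)$ and $(c,d)$ are not parallel) the sum becomes
\[
\underset{\left(  m,n\right)  \neq\left(  0,0\right)  ,\ cm+dn=0}{\sum\sum}\frac{\widehat{\varphi}\left(  \varepsilon m,\varepsilon n\right)  }{\left(am+bn\right)  ^{h+1}}=\frac{1}{\Delta^{h+1}}\sum_{k\in\mathbb{Z}-\{0\}}\frac{\widehat{\varphi}\left(  \varepsilon kd,-\varepsilon kc\right)  }{k^{h+1}}.
\]

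First I would record that $\widehat{\varphi}$ is an even function, which follows immediately from the hypothesis that $\varphi$ is radial (indeed $\widehat{\varphi}$ is itself radial). Consequently $\widehat{\varphi}\left(  \varepsilon kd,-\varepsilon kc\right)$ is invariant under $k\mapsto -k$. The second ingredient is the elementary observation that when $h$ is even the exponent $h+1$ is odd, so that $k^{-(h+1)}$ is an odd function of $k$. Combining these two facts shows that the summand is odd in $k$: its value at $-k$ equals $(-1)^{h+1}=-1$ times its value at $k$.

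The conclusion then follows by pairing each index $k$ with $-k$ over the symmetric set $\mathbb{Z}-\{0\}$; every such pair cancels, so the whole sum vanishes. This rearrangement is legitimate because, for each fixed $\varepsilon>0$, the factor $\widehat{\varphi}\left(  \varepsilon\cdot\right)$ has rapid decay and the series converges absolutely. I do not expect any genuine obstacle here: the identity is purely a parity phenomenon, and the only points requiring care are the explicit parametrization of the lattice line $cm+dn=0$ and the evenness of $\widehat{\varphi}$, both of which are immediate from the standing assumptions. The analogous statement for odd $h$ would of course fail, since then the summand is even and the pairing produces no cancellation; this is exactly the mechanism that, after summing the contributions of Lemma \ref{Lemma 2}, will leave only even powers of $1/N$ in Theorem \ref{Theorem 3}.
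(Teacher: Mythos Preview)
Your argument is correct and follows essentially the same route as the paper: the summand is odd under $(m,n)\mapsto(-m,-n)$ because $\widehat{\varphi}$ is radial and $h+1$ is odd, and the summation set is symmetric under this involution. Your explicit parametrization $(m,n)=k(d,-c)$ merely makes concrete what the paper states in one line; the paper reserves that parametrization for part~(2), where the actual value of the limit is computed.
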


(2) If $ad+bc\neq0$ with $c$ and $d$ coprime, and if $h$ is odd,
\begin{gather*}
\lim_{\varepsilon\rightarrow0+}\left\{  \underset{\left(  m,n\right)
\neq\left(  0,0\right)  ,\ cm+dn=0}{%
{\displaystyle\sum}
{\displaystyle\sum}
}\frac{\widehat{\varphi}\left(  \varepsilon m,\varepsilon n\right)  }{\left(
am+bn\right)  ^{h+1}}\right\} \\
=\left(  -1\right)  ^{\left(  h-1\right)  /2}2^{h+1}\pi^{h+1}B_{h+1}\left(
0\right)  \left(  ad-bc\right)  ^{-h-1}.
\end{gather*}

(3) If $h+k$ is odd,
\[
\underset{\left(  m,n\right)  \neq\left(  0,0\right)  ,\ em+fn=0}{%
{\displaystyle\sum}
{\displaystyle\sum}
}\frac{\widehat{\varphi}\left(  \varepsilon m,\varepsilon n\right)  }{\left(
am+bn\right)  ^{h+1}\left(  cm+dn\right)  ^{k+1}}=0.
\]

(4) If $af-be\neq0$, $cf-de\neq0$, with $e$ and $f$ coprime, and if $h+k$ is
even,
\begin{gather*}
\lim_{\varepsilon\rightarrow0+}\left\{  \underset{\left(  m,n\right)
\neq\left(  0,0\right)  ,\ em+fn=0}{%
{\displaystyle\sum}
{\displaystyle\sum}
}\frac{\widehat{\varphi}\left(  \varepsilon m,\varepsilon n\right)  }{\left(
am+bn\right)  ^{h+1}\left(  cm+dn\right)  ^{k+1}}\right\} \\
=\frac{\left(  -1\right)  ^{\left(  h+k\right)  /2}2^{h+k+2}\pi^{h+k+2}%
B_{h+k+2}\left(  0\right)  }{\left(  af-be\right)  ^{h+1}\left(  cf-de\right)
^{k+1}}.
\end{gather*}

\begin{proof}
(1) If $h$ is even, since $\widehat{\varphi}\left(  \xi,\eta\right)  $ is
radial, then $\widehat{\varphi}\left(  \varepsilon m,\varepsilon n\right)
\left(  am+bn\right)  ^{-h-1}$ is odd and the sum vanishes. \newline(2) Assume
$h$ odd. If $\left(  c,d\right)  =1$, the non zero integer points on the line
$cm+dn=0$ are $\left(  m,n\right)  =j\left(  d,-c\right)  $, with
$j\in\mathbb{Z-}\left\{  0\right\}  $. Hence, by dominated convergence,
\begin{gather*}
\lim_{\varepsilon\rightarrow0+}\left\{  \underset{\left(  m,n\right)
\in\mathbb{Z}^{2}-\left\{  \left(  0,0\right)  \right\}  ,\ cm+dn=0}{%
{\displaystyle\sum}
{\displaystyle\sum}
}\frac{\widehat{\varphi}\left(  \varepsilon m,\varepsilon n\right)  }{\left(
am+bn\right)  ^{h+1}}\right\} \\
=\left(  ad-bc\right)  ^{-h-1}%
{\displaystyle\sum_{j\in\mathbb{Z-}\left\{  0\right\}  }}
j^{-h-1}=-\left(  2\pi i\right)  ^{h+1}B_{h+1}\left(  0\right)  \left(
ad-bc\right)  ^{-h-1}.
\end{gather*}
The proof of (3) is the same as (1), and the proof of (4) is the same as (2).
\end{proof}

\bigskip

The following lemma is the analogous of the previous one for double series.

\bigskip

\begin{lemma}
\textbf{\label{Lemma 5}}Set$\ $%
\[
\mathcal{B}_{h,k}\left(  x,y\right)  =\left\{
\begin{array}
[c]{ll}%
B_{h}(x)B_{k}(y) & \ \text{if }0\leq x,y\leq1\text{,}\\
0 & \ \text{otherwise.}%
\end{array}
\right.
\]
Assume that $a$ and $b$ are coprime, that $c$ and $d$ are coprime, and that
$ad-bc\neq0$, and denote by $R$ the parallelogram
\[
R=\left\{  \left(  x,y\right)  \in\mathbb{R}^{2},\ 0\leq\dfrac{dx-cy}%
{ad-bc}\leq1,\ 0\leq\dfrac{-bx+ay}{ad-bc}\leq1\right\}
\]
(1) If $h+k$ is odd,
\[
\underset{am+bn\neq0,\ cm+dn\neq0}{%
{\displaystyle\sum}
{\displaystyle\sum}
}\widehat{\varphi}\left(  \varepsilon m,\varepsilon n\right)  \left(
am+bn\right)  ^{-h-1}\left(  cm+dn\right)  ^{-k-1}=0.
\]
(2) If $h+k$ is even,%
\begin{align*}
&  \lim_{\varepsilon\rightarrow0+}\left\{  \underset{am+bn\neq0,\ cm+dn\neq0}{%
{\displaystyle\sum}
{\displaystyle\sum}
}\widehat{\varphi}\left(  \varepsilon m,\varepsilon n\right)  \left(
am+bn\right)  ^{-h-1}\left(  cm+dn\right)  ^{-k-1}\right\} \\
=  &  \left(  -1\right)  ^{\left(  h+k+2\right)  /2}2^{h+k+2}\pi
^{h+k+2}\left\vert ad-bc\right\vert ^{-1}\\
&  \times\underset{\left(  m,n\right)  \in\mathbb{Z}^{2}}{%
{\displaystyle\sum}
{\displaystyle\sum}
}\omega_{R}\left(  m,n\right)  \mathcal{B}_{h+1,k+1}\left(  \dfrac
{dm-cn}{ad-bc},\dfrac{-bm+an}{ad-bc}\right)  .
\end{align*}

\end{lemma}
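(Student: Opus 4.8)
The plan is to obtain part (2) by running the mollified Poisson summation formula of the introduction \emph{backwards}, applied to a carefully chosen compactly supported function built from Bernoulli polynomials, and to dispose of part (1) by a parity argument identical in spirit to Lemma \ref{Lemma 4}(1),(3).

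For part (1), fix $\varepsilon>0$: the series is then absolutely convergent, the index set $\{am+bn\neq0,\ cm+dn\neq0\}$ is invariant under $(m,n)\mapsto(-m,-n)$, and $\widehat{\varphi}$ is radial hence even, while the factor $(am+bn)^{-h-1}(cm+dn)^{-k-1}$ is multiplied by $(-1)^{h+k}$ under this reflection. When $h+k$ is odd the summand is odd, so the sum cancels term by term and vanishes.

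For part (2), write $D=ad-bc$ and introduce the change of variables $(x,y)=(au+cv,\,bu+dv)$, whose inverse is $u=(dx-cy)/D$, $v=(-bx+ay)/D$ and which carries the unit square onto the parallelogram $R$. I would define the compactly supported function
\[
G(x,y)=\mathcal{B}_{h+1,k+1}\!\left(\tfrac{dx-cy}{D},\tfrac{-bx+ay}{D}\right),
\]
so that $G=\bigl[B_{h+1}(u)B_{k+1}(v)\bigr]\chi_{R}$, and compute its Fourier transform. Changing variables back to $(u,v)$ produces the Jacobian $|D|$ and turns the exponent $mx+ny$ into $(am+bn)u+(cm+dn)v$, so $\widehat{G}(m,n)$ factors as $|D|$ times a product of two one-dimensional integrals. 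Using the Bernoulli Fourier coefficients recorded at the start of Section 2, namely $\int_0^1 B_{j}(t)e^{-2\pi i\ell t}\,dt=-(2\pi i\ell)^{-j}$ for $\ell\neq0$ and $0$ for $\ell=0$ when $j\geq1$, each integral vanishes precisely when its frequency $am+bn$ (resp.\ $cm+dn$) is zero. Hence $\widehat{G}(m,n)=0$ unless both $am+bn\neq0$ and $cm+dn\neq0$, in which case
\[
\widehat{G}(m,n)=\frac{|D|}{(2\pi i)^{h+k+2}}\,(am+bn)^{-h-1}(cm+dn)^{-k-1}.
\]
Thus the non-orthogonality constraints in the statement arise automatically from the mean-zero property of the Bernoulli polynomials.

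Next I would apply the mollified Poisson summation formula exactly as in the introduction and in the proof of Theorem \ref{Theorem 1}. Since $B_{h+1}(u)B_{k+1}(v)$ is continuous one has $\lim_{\varepsilon\to0+}\varphi_{\varepsilon}\ast G(m,n)=\omega_{R}(m,n)\,\mathcal{B}_{h+1,k+1}(\ldots)$, and as $G$ has compact support the geometric sum $S=\sum_{(m,n)}\omega_{R}(m,n)\mathcal{B}_{h+1,k+1}(\ldots)$ is finite and equals $\lim_{\varepsilon\to0+}\sum_{(m,n)}\varphi_{\varepsilon}\ast G(m,n)$; Poisson summation then rewrites it as $\lim_{\varepsilon\to0+}\sum_{(m,n)}\widehat{\varphi}(\varepsilon m,\varepsilon n)\widehat{G}(m,n)$. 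Inserting the formula for $\widehat{G}$ and pulling out $|D|(2\pi i)^{-h-k-2}$ identifies this with $|D|(2\pi i)^{-h-k-2}$ times the very limit $L$ on the left of (2), so $L=(2\pi i)^{h+k+2}|D|^{-1}S$; since $h+k$ is even, $(2\pi i)^{h+k+2}=(-1)^{(h+k+2)/2}2^{h+k+2}\pi^{h+k+2}$, which is the claimed constant. The one genuinely delicate point is the legitimacy of Poisson summation for these discontinuous, non-absolutely-summable data together with the interchange of limit and summation; this is precisely the mollifier argument already justified earlier ($G$ is bounded with compact support, making the spatial side finite, while $\widehat{\varphi}(\varepsilon\,\cdot)$ renders the frequency side absolutely convergent for each $\varepsilon>0$). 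A useful by-product is that $S$ is manifestly independent of $\varphi$, so the identity $S=|D|(2\pi i)^{-h-k-2}L$ shows a posteriori that the regularized limit $L$ is mollifier-independent. I expect the coprimality hypotheses not to enter this Fourier computation at all --- only $ad-bc\neq0$ is used, to invert the linear map --- even though they are natural in the triangle decomposition that feeds the lemma.
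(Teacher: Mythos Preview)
Your proposal is correct and follows essentially the same route as the paper: for (1) the odd-parity cancellation, and for (2) the computation of the Fourier transform of the affinely transplanted Bernoulli function followed by the mollified Poisson summation formula. Your write-up is in fact more careful than the paper's own proof (which writes $\mathcal{B}_{h,k}$ where $\mathcal{B}_{h+1,k+1}$ is meant), and your side remark that only $ad-bc\neq0$ is used, not coprimality, is accurate.
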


\begin{proof}
(1) If $h+k$ is odd, then the sum vanishes by symmetry.\newline(2) Assume
$h+k$ even. The Fourier transform of
\[
\mathcal{B}_{h,k}\left(  \dfrac{dx-cy}{ad-bc},\dfrac{-bx+ay}{ad-bc}\right)
\]
evaluated at the integers $\left(  m,n\right)  $, with $am+bn\neq0$ and
$cm+dn\neq0$, is%
\begin{align*}
&
{\displaystyle\int}
{\displaystyle\int_{\mathbb{R}^{2}}}
\mathcal{B}_{h,k}\left(  \dfrac{dx-cy}{ad-bc},\dfrac{-bx+ay}{ad-bc}\right)
e^{-2\pi i\left(  mx+ny\right)  }dxdy\\
=  &  \left\vert ad-bc\right\vert
{\displaystyle\int}
{\displaystyle\int_{\mathbb{R}^{2}}}
\mathcal{B}_{h,k}\left(  s,t\right)  e^{-2\pi i\left(  m\left(  as+ct\right)
+n\left(  bs+dt\right)  \right)  }dsdt\\
=  &  \left\vert ad-bc\right\vert \left(
{\displaystyle\int_{0}^{1}}
B_{h}(s)e^{-2\pi i\left(  am+bn\right)  s}ds\right)  \left(
{\displaystyle\int_{0}^{1}}
B_{k}(t)e^{-2\pi i\left(  cm+dn\right)  t}dt\right) \\
=  &  \left\vert ad-bc\right\vert \left(  2\pi i\left(  am+bn\right)  \right)
^{-h-1}\left(  2\pi i\left(  cm+dn\right)  \right)  ^{-k-1}.
\end{align*}
Then%
\[
\underset{am+bn\neq0,\ cm+dn\neq0}{%
{\displaystyle\sum}
{\displaystyle\sum}
}\left\vert ad-bc\right\vert \left(  2\pi i\left(  am+bn\right)  \right)
^{-h-1}\left(  2\pi i\left(  cm+dn\right)  \right)  ^{-k-1}%
\]
is formally the sum over $\mathbb{Z}^{2}$ of the Fourier coefficients of the
function%
\[
\mathcal{B}_{h,k}\left(  \dfrac{dx-cy}{ad-bc},\dfrac{-bx+ay}{ad-bc}\right)  .
\]
Hence (2) follows from the Poisson summation formula. The factors $\omega
_{R}\left(  m,n\right)  $ in the formula come from the fact that the function
$\mathcal{B}_{h,k}\left(  \dfrac{dx-cy}{ad-bc},\dfrac{-bx+ay}{ad-bc}\right)  $
may be discontinuous on the boundary of $R$. Observe that the sum%
\[
\underset{\left(  m,n\right)  \in\mathbb{Z}^{2}}{%
{\displaystyle\sum}
{\displaystyle\sum}
}\omega_{R}\left(  m,n\right)  \mathcal{B}_{h,k}\left(  \dfrac{dm-cn}%
{ad-bc},\dfrac{-bm+an}{ad-bc}\right)
\]
is finite. Hence limit of the series%
\[
\lim_{\varepsilon\rightarrow0+}\left\{  \underset{am+bn\neq0,\ cm+dn\neq0}{%
{\displaystyle\sum}
{\displaystyle\sum}
}\widehat{\varphi}\left(  \varepsilon m,\varepsilon n\right)  \left(
am+bn\right)  ^{-h-1}\left(  cm+dn\right)  ^{-k-1}\right\}
\]
can be computed explicitly in a finite number of steps.
\end{proof}

\bigskip

It remains to estimate the remainders.

\bigskip

\begin{lemma}
\textbf{\label{Lemma 6}}Assume $w>0$ and let $am+bn=0$, $cm+dn=0$, $em+fn=0$
be distinct lines.\newline(1) If $n^{-w-1}C\left(  w,n\right)  $ is one of the
remainders in part (1) of Lemma \ref{Lemma 2}, then for some constant $C$
independent of $\varepsilon$ and $N$,
\[
\left\vert \underset{\left(  m,n\right)  \neq\left(  0,0\right)  ,\ cm+dn=0}{%
{\displaystyle\sum}
{\displaystyle\sum}
}\widehat{\varphi}\left(  \varepsilon m,\varepsilon n\right)  \frac{C\left(
w,N\left(  am+bn\right)  \right)  }{\left(  N\left(  am+bn\right)  \right)
^{w+1}}\right\vert \leq CN^{-w-1}.
\]
(2) If $R\left(  w,m,n\right)  $ is the remainder in part (2) of Lemma
\ref{Lemma 2}, then for some constant $C$ independent of $\varepsilon$ and
$N$,
\begin{gather*}
\left\vert \underset{am+bn\neq0,\ cm+dn\neq0,\ em+fn\neq0}{%
{\displaystyle\sum}
{\displaystyle\sum}
}\widehat{\varphi}\left(  \varepsilon m,\varepsilon n\right)  R\left(
w,N\left(  am+bn\right)  ,N\left(  cm+dn\right)  \right)  \right\vert \\
\leq CN^{-w-2}.
\end{gather*}

\end{lemma}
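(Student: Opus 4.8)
The plan is to reduce everything, via Lemma \ref{Lemma 3}, to sums over the standard simplex, and then to estimate the contribution of each piece of the remainders in Lemma \ref{Lemma 2} after the substitution $m\mapsto N(am+bn)$, $n\mapsto N(cm+dn)$. Writing $u=am+bn$, $v=cm+dn$ and $s=em+fn=u-v$, every remainder piece acquires an explicit prefactor $N^{-w-1}$ (part 1) or $N^{-w-2}$ (part 2) by counting powers of $N$; the real content is that the remaining lattice sums are bounded uniformly in $\varepsilon$ and $N$. Throughout I would use $|\widehat\varphi|\le\|\varphi\|_{1}$, the fact that $(u,v)$ ranges over a sublattice of $\mathbb{Z}^{2}$ (so sums of nonnegative terms are dominated by the corresponding sums over $\mathbb{Z}^{2}$, and an argument $Nu$ ranges over distinct nonzero integers), and the $\ell^{2}$-bounds on the coefficients supplied by Lemma \ref{Lemma 2}.

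For part (1) the relevant points lie on the line $cm+dn=0$ through the origin, where they are $j(d,-c)$ and $am+bn=j\Delta$ with $\Delta=ad-bc$. Since the exponent $w+1\ge2$, one Cauchy--Schwarz step, combined with $\sum_{j\neq0}|C(w,Nj\Delta)|^{2}\le\sum_{n\neq0}|C(w,n)|^{2}\le C$ and $\sum_{j\neq0}|j|^{-2w-2}<\infty$, gives the bound $CN^{-w-1}$ at once; no cancellation is needed here. For part (2) I would split $R(w,Nu,Nv)$ into its five groups and treat them in decreasing order of ease. The term $\Gamma_{3}$, which carries a genuine two-dimensional $\ell^{2}$-bound and the weight $|u|^{-1}|v|^{-w-1}$, is handled by a single Cauchy--Schwarz, since squaring turns $|u|^{-1}$ into the summable $|u|^{-2}$ and $\sum_{u}|u|^{-2}\sum_{v}|v|^{-2w-2}<\infty$. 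The terms $A_{k},B_{k}$ with $k\ge1$ and the term $\Gamma_{2}$ are also purely ``soft'': for $A_{k},B_{k}$ the transverse exponent is $k+1\ge2$ and one applies Cauchy--Schwarz in the variable carrying the $\ell^{2}$-bound (the exponent $w-k+1\ge1$ of that variable keeping $\sum|\cdot|^{-2(w-k+1)}$ finite); for $\Gamma_{2}$, fixing $s$ and summing in $v$ the weight $|v+s|^{-1}|v|^{-w-1}$ produces, after splitting at $|v|\le|s|/2$, a factor $H(s)=O(|s|^{-1})$, so that $\sum_{s}H(s)^{2}<\infty$ and Cauchy--Schwarz in $s$ closes the estimate.

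The main obstacle is the three remaining pieces $A_{0}$, $B_{0}$ and $\Gamma_{1}$. Each has a lone linear form to the first power in the denominator, while its $\ell^{2}$-coefficient and its high power live in the transverse variable; hence, summing along the relevant line one meets $\sum\widehat\varphi(\varepsilon m,\varepsilon n)/\ell(m,n)$ with $\ell$ a primitive form that is not summable and whose odd symmetry is destroyed, since this line does not pass through the origin. The key point to establish is therefore a one-dimensional cancellation lemma: for such a line the regularized sum $\sum\widehat\varphi(\varepsilon m,\varepsilon n)/\ell(m,n)$ is bounded uniformly in $\varepsilon$ and in the position of the line. I would prove it by summation by parts, pairing the symmetric partial sums of $\sum_{j}(r_{0}+j\Delta)^{-1}$, which are uniformly bounded (the excluded zero term, when present, only improves matters by restoring symmetry), against the sequence $j\mapsto\widehat\varphi(\varepsilon m,\varepsilon n)$, whose total variation $\sum_{j}|\widehat\varphi(\varepsilon\cdot)(j+1)-\widehat\varphi(\varepsilon\cdot)(j)|$ is bounded by a constant (the $L^{1}$-norm of the radial derivative of $\widehat\varphi$) independent of $\varepsilon$. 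Granting this uniform bound $|h|\le C$ on the line sums, each of $A_{0},B_{0},\Gamma_{1}$ becomes $N^{-w-2}\sum(\mathrm{coeff}(N\cdot))(\cdot)^{-w-1}h$, and a final Cauchy--Schwarz in the transverse variable, using the $\ell^{2}$-bound and $\sum(\cdot)^{-2w-2}<\infty$, yields $CN^{-w-2}$. Assembling the five groups then gives the claimed estimate.
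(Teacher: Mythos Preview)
Your treatment of part (1) and of the ``soft'' pieces $A_{k},B_{k}$ $(k\ge1)$, $\Gamma_{2}$, $\Gamma_{3}$ in part (2) is fine; these are exactly the terms one can dispatch by Cauchy--Schwarz and the $\ell^{2}$-bounds from Lemma~\ref{Lemma 2}, and your bookkeeping of the $N$-powers is correct.

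The gap is in the ``cancellation lemma'' you invoke for $A_{0}$, $B_{0}$, $\Gamma_{1}$. You assert that for a fixed line $am+bn=u$ the regularized sum
\[
h(u,\varepsilon)=\sum_{\substack{(m,n)\in\mathbb{Z}^{2},\ am+bn=u\\ cm+dn\neq0}}\frac{\widehat{\varphi}(\varepsilon m,\varepsilon n)}{cm+dn}
\]
is bounded \emph{uniformly in $u$ and $\varepsilon$}, and you justify this by Abel summation with the \emph{symmetric} partial sums $T_{J}=\sum_{|j|\le J}(r_{0}+j\Delta)^{-1}$ and the total variation of $b_{j}=\widehat{\varphi}(\varepsilon P_{j})$. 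Both ingredients are indeed bounded, but there is no Abel-type identity that turns boundedness of the \emph{symmetric} partial sums plus bounded TV of $b$ into a bound for $\sum a_{j}b_{j}$. A clean counterexample: take $a_{j}=1/j$ (so $T_{J}\equiv0$) and $b_{j}=\chi_{[0,M]}(j)$ (TV $=2$); then $\sum a_{j}b_{j}=\sum_{j=1}^{M}1/j\sim\log M$. In our situation the window of $b_{j}$ is centered at the foot of the perpendicular from $0$ to the line $am+bn=u$, which in the $v$-variable sits at $v_{\ast}=(ac+bd)u/(a^{2}+b^{2})$; unless $(a,b)\perp(c,d)$ this is \emph{not} at $v=0$, and for $|u|\sim 1/\varepsilon$ one checks that $h(u,\varepsilon)\sim\log(1/\varepsilon)$, so the uniform bound you claim is false.

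The paper avoids this by \emph{not} splitting into line sums. For, say, the $A_{0}$-piece it packages the whole double sum as the Poisson dual of a bounded compactly supported function: with
\[
\mathcal{F}_{N}(x,y)=\Bigl(\sum_{m\neq0}\frac{A_{0}(w,Nm)e^{2\pi imx}}{m^{\,w+1}}\Bigr)\Bigl(\sum_{n\neq0}\frac{e^{2\pi iny}}{n}\Bigr)\chi_{[0,1]^{2}}(x,y),
\]
one has $\widehat{\mathcal{F}_{N}}(am+bn,cm+dn)$ equal (up to $|ad-bc|$) to the summand, and Poisson summation converts the double series into a \emph{finite} sum of mollified values of $\mathcal{F}_{N}$, which is bounded uniformly in $\varepsilon$ and $N$ because $\mathcal{F}_{N}$ is bounded (the second factor is a sawtooth, the first is bounded by Cauchy--Schwarz and the $\ell^{2}$-bound on $A_{0}$). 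This is exactly the mechanism of Lemma~\ref{Lemma 5}, and it is what replaces your cancellation lemma. Your overall strategy can be repaired---for instance by proving instead $|h(u,\varepsilon)|\le C\bigl(1+\log(1+|u|)\bigr)$ and absorbing the logarithm into the factor $|u|^{-w-1}$---but as written the key step does not hold.
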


\begin{proof}
(1) follows from the fact that $C\left(  w,N\left(  am+bn\right)  \right)  $
is bounded. The proof of (2) is a bit more complicated, since $R\left(
w,m,n\right)  $ is sum of many terms, and not all the series involved are
absolutely convergent. We shall consider just two of them. A term in $R\left(
w,m,n\right)  $ is%
\[
A_{0}\left(  w,N\left(  am+bn\right)  \right)  \left(  N\left(  am+bn\right)
\right)  ^{-w-1}\left(  N\left(  cm+dn\right)  \right)  ^{-1}.
\]
After factorizing $N^{-w-2}$, one has to estimate the series
\[
\underset{am+bn\neq0,\ cm+dn\neq0}{%
{\displaystyle\sum}
{\displaystyle\sum}
}\widehat{\varphi}\left(  \varepsilon m,\varepsilon n\right)  A_{0}\left(
w,N\left(  am+bn\right)  \right)  \left(  am+bn\right)  ^{-w-1}\left(
cm+dn\right)  ^{-1}.
\]
Observe that the corresponding series without the cutoff $\widehat{\varphi
}\left(  \varepsilon m,\varepsilon n\right)  $ does not converge absolutely.
As in the previous lemma define%
\[
\mathcal{F}_{N}\left(  x,y\right)  =\left\{
\begin{array}
[c]{ll}%
{\displaystyle\sum\limits_{m\neq0}}
\dfrac{A_{0}\left(  w,Nm\right)  e^{2\pi imx}}{m^{w+1}}\cdot%
{\displaystyle\sum\limits_{n\neq0}}
\dfrac{e^{2\pi iny}}{n} & \text{if }0\leq x,y\leq1\text{,}\\
0 & \text{otherwise.}%
\end{array}
\right.
\]
The first series converges absolutely and it can be bounded independently on
$N$. And also the second series defines a bounded function, which is up to a
constant the degree one Bernoulli polynomial. Hence $\mathcal{F}_{N}\left(
x,y\right)  $ is a bounded function. Moreover, as in Lemma \ref{Lemma 5}, the
Fourier transform of $\mathcal{F}_{N}\left(  \dfrac{dx-cy}{ad-bc}%
,\dfrac{-bx+ay}{ad-bc}\right)  $ evaluated at the integers is
\begin{align*}
&
{\displaystyle\int}
{\displaystyle\int_{\mathbb{R}^{2}}}
\mathcal{F}_{N}\left(  \dfrac{dx-cy}{ad-bc},\dfrac{-bx+ay}{ad-bc}\right)
e^{-2\pi i\left(  mx+ny\right)  }dxdy\\
=  &  \left\vert ad-bc\right\vert
{\displaystyle\int}
{\displaystyle\int_{\mathbb{R}^{2}}}
\mathcal{F}_{N}\left(  s,t\right)  e^{-2\pi i\left(  m\left(  as+ct\right)
+n\left(  bs+dt\right)  \right)  }dsdt\\
=  &  \left\vert ad-bc\right\vert A_{0}\left(  w,N\left(  am+bn\right)
\right)  \left(  am+bn\right)  ^{-w-1}\left(  cm+dn\right)  ^{-1}.
\end{align*}
Then, by the Poisson summation formula,%
\begin{align*}
&  \left\vert ad-bc\right\vert ^{-1}\underset{\left(  m,n\right)
\in\mathbb{Z}^{2}}{%
{\displaystyle\sum}
{\displaystyle\sum}
}%
{\displaystyle\int}
{\displaystyle\int_{\mathbb{R}^{2}}}
\varphi_{\varepsilon}\left(  x,y\right)  \mathcal{F}_{N}\left(  \dfrac
{dm-cn}{ad-bc}-x,\dfrac{-bm+an}{ad-bc}-y\right)  dxdy\\
=  &  \underset{\left(  m,n\right)  \in\mathbb{Z}^{2}}{%
{\displaystyle\sum}
{\displaystyle\sum}
}\widehat{\varphi}\left(  \varepsilon m,\varepsilon n\right)  \widehat
{\mathcal{F}_{N}}\left(  am+bn,cm+dn\right) \\
=  &  \underset{\ am+bn\neq0,\ cm+dn\neq0}{%
{\displaystyle\sum}
{\displaystyle\sum}
}\widehat{\varphi}\left(  \varepsilon m,\varepsilon n\right)  \frac
{A_{0}\left(  w,N\left(  am+bn\right)  \right)  }{\left(  am+bn\right)
^{w+1}\left(  cm+dn\right)  }.
\end{align*}
The first series is indeed a finite sum that can be bounded independently of
$\varepsilon$ and $N$. Hence also the last series is bounded independently of
$\varepsilon$ and $N$.\newline Another term in $R\left(  w,m,n\right)  $ is
\[
A_{w}\left(  w,N\left(  am+bn\right)  \right)  \left(  N\left(  am+bn\right)
\right)  ^{-1}\left(  N\left(  cm+dn\right)  \right)  ^{-w-1}.
\]
After factorizing $N^{-w-2}$ one has to estimate the series
\[
\underset{am+bn\neq0,\ cm+dn\neq0}{%
{\displaystyle\sum}
{\displaystyle\sum}
}\widehat{\varphi}\left(  \varepsilon m,\varepsilon n\right)  \frac
{A_{w}\left(  w,N\left(  am+bn\right)  \right)  }{\left(  am+bn\right)
\left(  cm+dn\right)  ^{w+1}}.
\]
This series is absolutely convergent and it can be bounded independently of
$\varepsilon$ and $N$,
\begin{align*}
&  \underset{am+bn\neq0,\ cm+dn\neq0}{%
{\displaystyle\sum}
{\displaystyle\sum}
}\left\vert \widehat{\varphi}\left(  \varepsilon m,\varepsilon n\right)
\frac{A_{w}\left(  w,N\left(  am+bn\right)  \right)  }{\left(  am+bn\right)
\left(  cm+dn\right)  ^{w+1}}\right\vert \\
\leq &  \sup_{\left(  \xi,\eta\right)  \in\mathbb{R}^{2}}\left\{  \left\vert
\widehat{\varphi}\left(  \xi,\eta\right)  \right\vert \right\}
{\displaystyle\sum_{j\neq0}}
\left\vert j\right\vert ^{-w-1}\left\{  \underset{am+bn\neq0,\ cm+dn=j}{%
{\displaystyle\sum}
{\displaystyle\sum}
}\left\vert am+bn\right\vert ^{-2}\right\}  ^{1/2}\\
&  \times\left\{  \underset{am+bn\neq0,\ cm+dn=j}{%
{\displaystyle\sum}
{\displaystyle\sum}
}\left\vert A_{w}\left(  w,N\left(  am+bn\right)  \right)  \right\vert
^{2}\right\}  ^{1/2}.
\end{align*}
All the other terms in $R\left(  w,m,n\right)  $ can be estimated in a similar
way. We remark about the notation that the $w$'s in part (1) and (2) of the
above lemma are arbitrary, not necessarily equal, and not equal to the $w$ in
the theorem. This completes the proof of Theorem \ref{Theorem 3}.
\end{proof}

\bigskip

\section{Quadrature formulas for polygons}

In this section, we give quadrature formulas whose nodes are all of the integer points.
We return to our use of compact vector notation $x  \in \mathbb{R}^{2}$ and
$n \in \mathbb{Z}^{2}$. The constants $\left\{  \delta\left(  j\right)
\right\}  $ in Theorem \ref{Theorem 3} can be computed explicitly in a finite
number of steps, but they are composed of many pieces, and the final result is
not as clean as it is in $\mathbb R^1$. However, disregarding these terms in the
asymptotic expansion, one recognizes an analog of the trapezoidal rule for
approximating integrals.

\bigskip

\begin{corollary}
\label{Corollary 1}If $P$ is an open integer polygon, and if $g\left(
x\right)  $ is a smooth function, then for every positive integer $N$,
\begin{gather*}
\left\vert
{\displaystyle\int_{P}}
g\left(  x\right)  dx-\left(  \dfrac{1}{N^{2}}%
{\displaystyle\sum_{N^{-1}n\in P}}
g\left(  N^{-1}n\right)  +\dfrac{1}{2N^{2}}%
{\displaystyle\sum_{N^{-1}n\in\partial P}}
g\left(  N^{-1}n\right)  \right)  \right\vert \\
\leq\dfrac{C}{N^{2}}.
\end{gather*}

\end{corollary}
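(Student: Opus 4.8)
The plan is to deduce the corollary directly from Theorem \ref{Theorem 3} by comparing the trapezoidal-type sum in the statement with the solid-angle weighted Riemann sum $N^{-2}\sum_{n}\omega_{P}(N^{-1}n)g(N^{-1}n)$. First I would apply Theorem \ref{Theorem 3} with $w=1$:
\[
N^{-2}\sum_{n\in\mathbb{Z}^{2}}\omega_{P}\left(N^{-1}n\right)g\left(N^{-1}n\right)=\int_{P}g\left(x\right)dx+\frac{\delta\left(1\right)}{N^{2}}+\frac{R\left(1,N\right)}{N^{4}},
\]
where $\left\vert R\left(1,N\right)\right\vert\leq C$ uniformly in $N$. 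Since both $\delta\left(1\right)/N^{2}$ and $R\left(1,N\right)/N^{4}$ are $O(N^{-2})$, this already yields the integral up to an error of size $C/N^{2}$; it therefore remains only to check that replacing the weights $\omega_{P}(N^{-1}n)$ by the trapezoidal weights costs at most another $O(N^{-2})$.

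Next I would compare the two weightings point by point. Because $P$ is open, the term $N^{-2}\sum_{N^{-1}n\in P}g(N^{-1}n)$ assigns weight $1$ to each interior sampling point, while $\tfrac{1}{2N^{2}}\sum_{N^{-1}n\in\partial P}g(N^{-1}n)$ assigns weight $1/2$ to each boundary sampling point. Recalling the values of $\omega_{P}$ listed in the introduction, the two schemes agree at interior points (weight $1$), at exterior points (weight $0$), and at points lying on a side but not at a vertex (weight $1/2$). They disagree only at the vertices of $P$, where $\omega_{P}$ equals $\vartheta/2\pi$ (with $\vartheta$ the interior angle) whereas the trapezoidal scheme uses $1/2$.

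Consequently the difference between the two sums is
\[
\frac{1}{N^{2}}\sum_{V}\left(\frac{1}{2}-\frac{\vartheta_{V}}{2\pi}\right)g\left(V\right),
\]
where $V$ ranges over those vertices of $P$ for which the equation $N^{-1}n=V$ has an integer solution $n$. Since $P$ is an integer polygon, every vertex $V$ has integer coordinates, so $n=NV\in\mathbb{Z}^{2}$ and each vertex indeed contributes exactly one sampling point. As $P$ has only finitely many vertices and $g$ is bounded on the compact closure of $P$, this finite sum is bounded by $C/N^{2}$ for a constant $C$ depending only on $P$ and $g$. Combining this with the displayed consequence of Theorem \ref{Theorem 3} and the triangle inequality gives the claimed estimate.

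I do not expect a serious obstacle here: the corollary is essentially a bookkeeping consequence of Theorem \ref{Theorem 3}. The only point requiring care is the identification of where the solid-angle weights and the trapezoidal weights disagree, namely exactly at the finitely many integer vertices, together with the observation that each such discrepancy contributes $O(N^{-2})$ and is therefore harmless at the level of accuracy being claimed.
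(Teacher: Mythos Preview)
Your proposal is correct and follows essentially the same approach as the paper's own proof: apply Theorem \ref{Theorem 3} to control the weighted Riemann sum to order $N^{-2}$, then observe that the trapezoidal weights and the solid-angle weights agree except at the finitely many integer vertices, whose total contribution is $O(N^{-2})$ and hence negligible. The paper states this in a single sentence, while you have (correctly) written out the vertex discrepancy explicitly.
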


\begin{proof}
By Theorem \ref{Theorem 3}, it suffices to observe that on the boundary of the polygon we have $\omega
_{P}\left(  N^{-1}n\right)  =1/2$, except at the vertices, where the
contribution is of the order of $N^{-2}$, hence negligible.
\end{proof}

\bigskip

In the above corollary there are no weights $\omega_{P}\left(  P_{j}\right)  $
at the vertices $P_{j}$. When $P$ is an integer polygon, then $\omega
_{P}\left(  P_{j}\right)  =\left(  2\pi\right)  ^{-1}\arctan\left(
a/b\right)  $, with $a$ and $b$ suitable integers, and it can be proved that
either this weight is an integer multiple of $1/8$, or it is irrational. See
Corollary 3.12 in \cite{N}. Hence Corollary \ref{Corollary 1} is, so to speak,
a rational approximation of Theorem \ref{Theorem 3}. Anyhow, by an elementary
trick suggested by Huygens and Newton, one can accelerate the convergence of
the Riemann sums and obtain a result which is better than the one above.
Huygens in \textit{"De circuli magnitudine inventa"} (1654) proved the
following:

\begin{quote} 
 \textit{The circumference of a circle is larger than the perimeter
of an inscribed equilateral polygon plus one third of the difference between
the perimeter of this polygon and the perimeter of an inscribed polygon with
half number of sides.}
\end{quote}

\noindent 
The latter statement reduces to the trigonometric inequality
\[
\pi>2N\sin\left(  \pi/2N\right)  +\dfrac{1}{3}\left(  2N\sin\left(
\pi/2N\right)  -N\sin\left(  \pi/N\right)  \right)  .
\]
Observe that this inequality gives a better approximation to $\pi$ than the
inequality $\pi>2N\sin\left(  \pi/2N\right)  $ used by Archimedes in the
\textit{"Dimensio circuli". } Newton in his correspondence with Leibniz through
Oldenburg, (\textit{"Epistola Prior"} 13/6/1676) explained the theorem of
Huygens in term of the power series expansion of the sine function:%
\begin{gather*}
\sin\left(  x\right)  =x-x^{3}/6+x^{5}/120-...,\\
\dfrac{4}{3}\dfrac{\sin\left(  x\right)  }{x}-\dfrac{1}{3}\dfrac{\sin\left(
2x\right)  }{2x}=1-x^{4}/30+...
\end{gather*}
By applying this trick to our Riemann sums one can accelerate their
convergence from $N^{-2}$ to $N^{-4}$, or $N^{-6}$, or in principle to an
arbitrary speed.

\bigskip

\begin{corollary}
\textbf{\label{Corollary 2}}If $P$ is an integer polygon, and if $g\left(
x\right)  $ is a smooth function, set
\[
S\left(  N\right)  =N^{-2}%
{\displaystyle\sum_{n\in\mathbb{Z}^{2}}}
\omega_{P}\left(  N^{-1}n\right)  g\left(  N^{-1}n\right)  .
\]
Then there exists $C$ such that for every positive integer $N$,
\[
\left\vert
{\displaystyle\int_{P}}
g\left(  x\right)  dx-\left(  -\dfrac{1}{3}S\left(  N\right)  +\dfrac{4}%
{3}S\left(  2N\right)  \right)  \right\vert \leq\dfrac{C}{N^{4}}.
\]
Similarly, there exists $C$ such that for every positive integer $N$,
\[
\left\vert
{\displaystyle\int_{P}}
g\left(  x\right)  dx-\left(  \dfrac{1}{45}S\left(  N\right)  -\dfrac{20}%
{45}S\left(  2N\right)  +\dfrac{64}{45}S\left(  4N\right)  \right)
\right\vert \leq\dfrac{C}{N^{6}}.
\]
And so on...
\end{corollary}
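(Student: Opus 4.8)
The plan is to exploit Theorem~\ref{Theorem 3}, which tells us that $S(N)$ admits an asymptotic expansion in \emph{even} powers of $1/N$, namely
\[
S(N)=\int_{P}g\left(  x\right)  dx+\frac{\delta(1)}{N^{2}}+\frac{\delta(2)}{N^{4}}+\cdots+\frac{\delta(w)}{N^{2w}}+\frac{R(w,N)}{N^{2w+2}},
\]
with $|R(w,N)|\leq C$ uniformly in $N$. The entire corollary is then an exercise in \emph{Richardson extrapolation}: we combine the values $S(N),S(2N),S(4N),\ldots$ with coefficients chosen to annihilate the leading error terms $\delta(1)/N^{2},\delta(2)/N^{4},\ldots$ one at a time. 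This is precisely the Huygens--Newton trick described above, applied to the even-power expansion rather than to $\sin(x)/x$.

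First I would treat the two-term combination. Writing $I=\int_{P}g\left(  x\right)  dx$ and abbreviating the expansion through $w=1$ as $S(N)=I+\delta(1)N^{-2}+O(N^{-4})$, I replace $N$ by $2N$ to get $S(2N)=I+\delta(1)(2N)^{-2}+O(N^{-4})=I+\tfrac14\delta(1)N^{-2}+O(N^{-4})$. The coefficients $-\tfrac13$ and $\tfrac43$ are exactly those for which $-\tfrac13+\tfrac43=1$ (so the integral $I$ is preserved) and $-\tfrac13\cdot 1+\tfrac43\cdot\tfrac14=0$ (so the $\delta(1)N^{-2}$ terms cancel). Hence
\[
-\tfrac13 S(N)+\tfrac43 S(2N)=I+0\cdot\frac{\delta(1)}{N^{2}}+O(N^{-4}),
\]
and the remainder is bounded by $C/N^{4}$ because each $O(N^{-4})$ term from Theorem~\ref{Theorem 3} has a uniformly bounded numerator. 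This gives the first inequality.

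For the three-term combination I would solve the linear system forcing the coefficients $(c_{0},c_{1},c_{2})$ on $\bigl(S(N),S(2N),S(4N)\bigr)$ to satisfy $c_{0}+c_{1}+c_{2}=1$ (preserve $I$), $c_{0}+\tfrac14 c_{1}+\tfrac{1}{16}c_{2}=0$ (kill $\delta(1)N^{-2}$), and $c_{0}+\tfrac{1}{16}c_{1}+\tfrac{1}{256}c_{2}=0$ (kill $\delta(2)N^{-4}$); this Vandermonde system in the nodes $1,\tfrac14,\tfrac{1}{16}$ has the unique solution $\bigl(\tfrac1{45},-\tfrac{20}{45},\tfrac{64}{45}\bigr)$ quoted in the statement, and one applies Theorem~\ref{Theorem 3} with $w=2$. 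The phrase ``and so on'' is then justified by induction: to gain $1/N^{2k}$ one uses the nodes $N,2N,\ldots,2^{k-1}N$, the associated Vandermonde system in $4^{-j}$ is nonsingular, and Theorem~\ref{Theorem 3} with $w=k$ supplies the uniformly bounded remainder. \emph{The only real point to check} is that the bounded-remainder clause of Theorem~\ref{Theorem 3} survives under the finite linear combination, which is immediate since a fixed finite sum of quantities bounded by $C/N^{2w+2}$ is again bounded by a constant times $N^{-2w-2}$; there is no genuine obstacle, merely the bookkeeping of solving a small Vandermonde system.
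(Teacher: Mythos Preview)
Your proposal is correct and is essentially the paper's own argument: both invoke Theorem~\ref{Theorem 3} to obtain the even-power expansion $S(N)=\alpha+\beta N^{-2}+\gamma N^{-4}+\cdots$, then form linear combinations of $S(N),S(2N),S(4N),\ldots$ with coefficients determined by a Vandermonde system so as to cancel successive leading terms. If anything, you are slightly more explicit than the paper about the uniform remainder bound surviving the finite linear combination.
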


\begin{proof}
By the theorem, there exist $\alpha$, $\beta$, $\gamma$,... such that
\[
S\left(  N\right)  =\alpha+\dfrac{\beta}{N^{2}}+\dfrac{\gamma}{N^{4}}+...
\]
Multiply $S\left(  N\right)  $ and $S\left(  2N\right)  $ by some constants
$x$ and $y$, and add,
\[
xS\left(  N\right)  +yS\left(  2N\right)  =\left(  x+y\right)  \alpha+\left(
x+y/4\right)  \dfrac{\beta}{N^{2}}+\left(  x+y/16\right)  \dfrac{\gamma}%
{N^{4}}+...
\]
Then, if $x=-1/3$ and $y=4/3$,
\[
-\dfrac{1}{3}S\left(  N\right)  +\dfrac{4}{3}S\left(  2N\right)
=\alpha-\dfrac{\gamma}{4N^{4}}+...
\]
Similarly, with a suitable linear combination $xS\left(  N\right)  +yS\left(
2N\right)  +zS\left(  4N\right)  $ one obtains an approximation of $\alpha$ of
order $N^{-6}$, and  this process may be continued.  Observe that the matrix associated to the linear system is a Vandermonde matrix.
\end{proof}

\bigskip

Observe that the sampling points $N^{-1}\mathbb{Z}^{2}\cap P$ that appear in
the sum $S\left(  N\right)  $ are a subset of the sampling points in $S\left(
2N\right)  $, and these are a subset of the ones in $S\left(  4N\right)  $,...
In the computation of $xS\left(  N\right)  +yS\left(  2N\right)  +zS\left(
4N\right)  +...$ one can collect equal sampling points, and multiply by
suitable weights. In particular the computation of $xS\left(  N\right)
+yS\left(  2N\right)  +zS\left(  4N\right)  +...$ has the same complexity of
the computation of the last summand $S\left(  2^{n}N\right)  $. As an explicit
example, observe that

\begin{align*}
-\dfrac{1}{3}S\left(  N\right)  +\dfrac{4}{3}S\left(  2N\right) & =  
-\dfrac{1}{3}N^{-2}%
{\displaystyle\sum_{n\in\mathbb{Z}^{2}}}
\omega_{P}\left(  N^{-1}n\right)  g\left(  N^{-1}n\right) \\
& \ \  \ \ +\dfrac{4}{3}\left(  2N\right)  ^{-2}%
{\displaystyle\sum_{n\in\mathbb{Z}^{2}}}
\omega_{P}\left(  \left(  2N\right)  ^{-1}n\right)  g\left(  \left(
2N\right)  ^{-1}n\right) \\
&  =\dfrac{1}{3}N^{-2}%
{\displaystyle\sum_{n\in\mathbb{Z}^{2}-2\mathbb{Z}^{2}}}
\omega_{P}\left(  \left(  2N\right)  ^{-1}n\right)  g\left(  \left(
2N\right)  ^{-1}n\right)  .
\end{align*}
The vertices of the polygon do not appear in the last sum, since if $\left(
2N\right)  ^{-1}n$ is an integer vertex, then $n\in2\mathbb{Z}^{2}$. Hence in
the last sum $\omega_{P}\left(  \left(  2N\right)  ^{-1}n\right)  $ takes only
the values $0$, $1/2$, $1$,
\begin{align*}
&  -\dfrac{1}{3}S\left(  N\right)  +\dfrac{4}{3}S\left(  2N\right) \\
=  &  \dfrac{1}{3N^{2}}%
{\displaystyle\sum_{\substack{n\in\mathbb{Z}^{2}\setminus2\mathbb{Z}%
^{2}\\\left(  2N\right)  ^{-1}n\in\overset{o}{P}}}}
g\left(  \left(  2N\right)  ^{-1}n\right)  +\dfrac{1}{6N^{2}}%
{\displaystyle\sum_{\substack{n\in\mathbb{Z}^{2}\setminus2\mathbb{Z}%
^{2}\\\left(  2N\right)  ^{-1}n\in\partial P}}}
g\left(  \left(  2N\right)  ^{-1}n\right)  .
\end{align*}

The statement of the corollary seems paradoxical. One trows away one fourth of
the grid $\left(  2N\right)  ^{-1}\mathbb{Z}^{2}\cap P$, and the order of
approximation increases.

This trick of Huygens and Newton works in every dimension. In particular, in
dimension one the weighted sum $-\dfrac{1}{3}S\left(  N\right)  +\dfrac{4}%
{3}S\left(  2N\right)  $ reduces to the Kepler, or Cavalieri, or Simpson
quadrature rule,
\begin{align*}
&  -\dfrac{1}{3}\left[  \dfrac{1}{2N}g\left(  a\right)  +\dfrac{1}{N}g\left(
a+\dfrac{1}{N}\right)  +\dfrac{1}{N}g\left(  a+\dfrac{2}{N}\right)
+...\right] \\
&  +\dfrac{4}{3}\left[  \dfrac{1}{4N}g\left(  a\right)  +\dfrac{1}{2N}g\left(
a+\dfrac{1}{2N}\right)  +\dfrac{1}{2N}g\left(  a+\dfrac{2}{2N}\right)
+\dfrac{1}{2N}g\left(  a+\dfrac{3}{2N}\right)  +...\right] \\
=  &  \dfrac{1}{6N}\left[  g\left(  a\right)  +4g\left(  a+\dfrac{1}%
{2N}\right)  +2g\left(  a+\dfrac{2}{2N}\right)  \right. \\
&  ~~~~~~~~~~~~~~\left.  +4g\left(  a+\dfrac{3}{2N}\right)  +2g\left(
a+\dfrac{4}{2N}\right)  +...\right]
\end{align*}

\textit{"What has been will be again, what has been done will be done again;
there is nothing new under the sun"}.

\bigskip

Let us go back to the theorem. When $g\left(  x\right)  $ is a polynomial only
a finite number of coefficients $\left\{  \delta\left(  h\right)  \right\}  $
are nonzero, and the asymptotic formula becomes exact.

\bigskip

\begin{corollary}
\label{Corollary 3}
For a homogeneous polynomial $g(x)$ of degree $\alpha$, we have:
\[%
{\displaystyle\sum_{n\in\mathbb{Z}^{2}}}
\omega_{NP}\left(  n\right)  g\left(  n\right)  =N^{\alpha+2}%
{\displaystyle\int_{P}}
g\left(  x\right)  dx+N^{\alpha+2}%
{\displaystyle\sum_{j=1}^{w}}
\dfrac{\delta\left(  j\right)  }{N^{2j}}.
\]

\end{corollary}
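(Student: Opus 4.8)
The plan is to reduce the statement to Theorem \ref{Theorem 3} by a pair of rescalings, and then to exploit the fact that a polynomial has only finitely many nonvanishing derivatives, so that the remainder in that theorem disappears.

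First I would record the scale invariance of the solid angle weight. Since $\omega_{P}$ is defined as a limit of volume ratios over shrinking balls, the change of variables $y=Nx$ shows that $\omega_{NP}(n)=\omega_{P}(N^{-1}n)$ for every $n$: the Jacobian factor $N^{2}$ cancels between numerator and denominator, and the ball of radius $\varepsilon$ about $n$ corresponds to the ball of radius $\varepsilon/N$ about $N^{-1}n$. Next, using that $g$ is homogeneous of degree $\alpha$, I would write $g(n)=N^{\alpha}g(N^{-1}n)$. Combining the two identities yields
\[
\sum_{n\in\mathbb{Z}^{2}}\omega_{NP}(n)\,g(n)=N^{\alpha}\sum_{n\in\mathbb{Z}^{2}}\omega_{P}(N^{-1}n)\,g(N^{-1}n),
\]
both sides being finite sums, since $\omega_{NP}$ vanishes off the bounded set $NP$.

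Now I would apply Theorem \ref{Theorem 3} to the weighted Riemann sum on the right and multiply its conclusion by $N^{\alpha+2}$, obtaining
\[
\sum_{n\in\mathbb{Z}^{2}}\omega_{NP}(n)\,g(n)=N^{\alpha+2}\int_{P}g(x)\,dx+N^{\alpha+2}\sum_{j=1}^{w}\frac{\delta(j)}{N^{2j}}+\frac{N^{\alpha}R(w,N)}{N^{2w}}.
\]
The decisive observation is that, by Theorem \ref{Theorem 3}, the remainder $R(w,N)$ depends only on the partial derivatives of $g$ of orders $2w+1$ and $2w+2$ in the interior of the polygon. Since $g$ is a polynomial of degree $\alpha$, all its derivatives of order exceeding $\alpha$ vanish identically; hence choosing any integer $w$ with $2w+1>\alpha$, that is $w\geq\lceil\alpha/2\rceil$, forces $R(w,N)=0$. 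The last term then drops out and the expansion becomes an exact identity, which is precisely the claimed formula.

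Finally I would check internal consistency: because $\delta(j)$ depends only on derivatives of orders $2j-2$ and $2j-1$ on the boundary, one has $\delta(j)=0$ as soon as $2j-2>\alpha$, so enlarging $w$ beyond $\lceil\alpha/2\rceil$ merely appends vanishing terms and the stated finite sum is unambiguous. I do not expect a genuine obstacle here; the only point requiring care is the bookkeeping that matches the vanishing of the high-order derivatives of a degree-$\alpha$ polynomial against the precise order $2w+1$ of the derivatives entering the remainder of Theorem \ref{Theorem 3}, together with the observation that homogeneity is exactly what is needed to factor the clean power $N^{\alpha+2}$ out front.
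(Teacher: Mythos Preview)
Your proof is correct and follows essentially the same route as the paper: you use the scaling identity $\omega_{P}(N^{-1}n)g(N^{-1}n)=N^{-\alpha}\omega_{NP}(n)g(n)$, apply Theorem~\ref{Theorem 3}, and observe that the remainder $R(w,N)$ vanishes once $2w+1>\alpha$ because it depends only on derivatives of order $2w+1$ and $2w+2$. Your threshold $w\ge\lceil\alpha/2\rceil$ coincides with the paper's $w>(\alpha-1)/2$, and your extra consistency remark about the $\delta(j)$ is a nice addition.
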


\begin{proof}
In Theorem \ref{Theorem 3}, the reminder $R\left(  w,N\right)  $ vanishes
provided that all derivatives of order $2w+1$ vanish. Hence, for every
$w>\left(  \alpha-1\right)  /2$,
\[
N^{-2}%
{\displaystyle\sum_{n\in\mathbb{Z}^{2}}}
\omega_{P}\left(  N^{-1}n\right)  g\left(  N^{-1}n\right)  =%
{\displaystyle\int_{P}}
g\left(  x\right)  dx+%
{\displaystyle\sum_{j=1}^{w}}
\dfrac{\delta\left(  j\right)  }{N^{2j}}.
\]
Because $g\left(  x\right)  $ is homogeneous of degree $\alpha$, we have
\[
\omega_{P}\left(  N^{-1}n\right)  g\left(  N^{-1}n\right)  =N^{-\alpha}%
\omega_{NP}\left(  n\right)  g\left(  n\right).
\]
\end{proof}

\bigskip
The case $g(x) =1$ of Corollary \ref{Corollary 3}  is the celebrated MacDonald solid-angle polynomial in two dimensions, from which the classical Pick's formula follows easily.

\bigskip

\begin{corollary} (Pick's formula)   \label{Corollary 4}
If $P$ is a simply connected integer polygon in the plane,
with $I$ interior integer points, $B$ boundary integer points, and area
$\left\vert P\right\vert $, then
\[
\left\vert P\right\vert =I+\frac{B}{2}-1.
\]

\end{corollary}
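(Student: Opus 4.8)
The plan is to read off Pick's formula from the case $g\left(x\right)=1$ of Corollary \ref{Corollary 3}, specialized to $N=1$, after matching the resulting solid-angle sum against a purely combinatorial lattice-point count. Write $A_{P}\left(N\right)=\sum_{n\in\mathbb{Z}^{2}}\omega_{NP}\left(n\right)$ for the MacDonald solid-angle sum, recalling that $\omega_{P}\left(N^{-1}n\right)=\omega_{NP}\left(n\right)$ by scale invariance of the normalized angle. First I would apply Corollary \ref{Corollary 3} to the degree-$0$ homogeneous polynomial $g\equiv1$ with $w=1$. Since every $\delta\left(j\right)$ depends only on partial derivatives of $g$ of orders $2j-2$ and $2j-1$, and all derivatives of a constant of order $\geq1$ vanish, the terms with $j\geq2$ drop out and the remainder vanishes, leaving the exact identity
\[
A_{P}\left(N\right)=\left\vert P\right\vert N^{2}+\delta\left(1\right),
\]
a polynomial in $N$ with no linear term and constant term $\delta\left(1\right)$.

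Next I would evaluate $A_{P}\left(1\right)$ directly from the definition of $\omega_{P}$. An interior lattice point contributes $1$, a boundary point that is not a vertex contributes $1/2$, a vertex with interior angle $\vartheta$ contributes $\vartheta/2\pi$, and points outside $P$ contribute $0$. If $P$ has $I$ interior points, $B$ boundary points, and $V$ vertices, then, because $P$ is simply connected, its interior angles sum to $\left(V-2\right)\pi$, whence
\[
A_{P}\left(1\right)=I+\frac{B-V}{2}+\frac{1}{2\pi}\left(V-2\right)\pi=I+\frac{B}{2}-1.
\]
Comparing the two displays at $N=1$ gives $I+\tfrac{B}{2}-1=\left\vert P\right\vert +\delta\left(1\right)$, so the whole corollary reduces to the single claim that the constant term vanishes, $\delta\left(1\right)=0$.

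The hard part will be this vanishing; everything else is bookkeeping. I would argue it via invariance and additivity. Both $A_{P}$ and $\left\vert P\right\vert$ are additive over integer polygons with disjoint interiors (as recorded just after Theorem \ref{Theorem 3}) and are invariant under lattice translations and unimodular changes of variable, since the normalized angle $\omega$ is itself affinely invariant; hence $\delta\left(1\right)=A_{P}\left(1\right)-\left\vert P\right\vert$ is an additive, translation- and unimodularly invariant valuation. Triangulating $P$ using all of its lattice points as vertices produces primitive (empty) triangles, each unimodularly equivalent to $T_{0}=\left\{\left(0,0\right),\left(1,0\right),\left(0,1\right)\right\}$, and a one-line computation gives $A_{T_{0}}\left(1\right)=\tfrac{1}{2\pi}\cdot\pi=\tfrac{1}{2}=\left\vert T_{0}\right\vert$, so $\delta\left(1\right)=0$ for $T_{0}$. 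By additivity $\delta\left(1\right)=0$ for every $P$. (Equivalently, this is precisely MacDonald's theorem that the two-dimensional solid-angle polynomial equals $\left\vert P\right\vert N^{2}$; see Chapter 13 of \cite{B-R}.) With $\delta\left(1\right)=0$ we obtain $A_{P}\left(1\right)=\left\vert P\right\vert$, and the combinatorial identity then reads $\left\vert P\right\vert =I+\tfrac{B}{2}-1$, which is Pick's formula. The only genuinely nontrivial ingredients are the vanishing of $\delta\left(1\right)$ and the existence of a primitive triangulation with empty triangles unimodular; both are classical and, importantly, independent of Pick's theorem, so no circularity arises.
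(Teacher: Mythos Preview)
Your argument is correct and follows the paper's line: both proofs reduce Pick's formula to the identity $\sum_{n\in\mathbb{Z}^2}\omega_P(n)=|P|$ and then evaluate the left side combinatorially as $I+B/2-1$ via the interior-angle sum of a simple polygon. The only difference is how the identity $\sum_n\omega_P(n)=|P|$ is obtained. The paper reads it straight off Corollary~\ref{Corollary 3} with $g\equiv 1$: since all positive-order derivatives of the constant function vanish, the remainder already vanishes at the lowest truncation and no $\delta$-term survives. You instead fix $w=1$, retain the a priori constant $\delta(1)$, and then show $\delta(1)=0$ by additivity and a primitive triangulation, in effect reproving MacDonald's two-dimensional solid-angle identity by hand. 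Your route is slightly longer but more self-contained about the constant term; the price is importing the (Pick-independent) lattice fact that empty integer triangles are unimodularly equivalent to the standard simplex.
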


\begin{proof}
By Corollary \ref{Corollary 3} with $g(x)=1$ and  $N=1$,
\[%
{\displaystyle\int_{P}}
dx=%
{\displaystyle\sum_{n\in\mathbb{Z}^{2}}}
\omega_{P}\left(  n\right)  .
\]
If $n$ is an integer point inside $P$, then $\omega_{P}\left(  n\right)  =1$.
If $n$ is on a side but it is not a vertex, then $\omega_{P}\left(  n\right)
=1/2$. If $P$ has $r$ vertices, then
\[%
{\displaystyle\sum_{n\ \text{vertex\ of}\ P}}
\omega_{P}\left(  n\right)  =\dfrac{\left(  r-2\right)  \pi}{2\pi}=\dfrac
{r}{2}-1.
\]
It follows that
\[%
{\displaystyle\sum_{n\in\mathbb{Z}^{2}}}
\omega_{P}\left(  n\right)  =I+\frac{B}{2}-1.
\]
\end{proof}

\bigskip
In \cite{B-C-R-T}, we presented a harmonic analysis proof of Pick's theorem, together with one possible conjecture for an extension to higher dimensions.   See \cite{P} for a very recent counterexample to this conjecture.

\section{Appendix: A numerical example}

Let us test Corollary \ref{Corollary 1} and Corollary \ref{Corollary 2} on an
explicit example. Take $P=\left\{  y>x/2,\ y<3-x,\ y<2x\right\}  $, $g\left(
x,y\right)  =x^{2}y^{3}$, and $N=2$, that is a grid of side $1/4$. The exact
value of the integral is
\[%
{\displaystyle\int}
{\displaystyle\int_{\left\{  y>x/2,\ y<2-x,\ y<2x\right\}  }}
x^{2}y^{3}dxdy=\dfrac{423}{140}=3.021...
\]
The Riemann sum in Corollary \ref{Corollary 1} runs over $31$ sampling points,
and it is a rough approximation to the integral:
\begin{align*}
&  \dfrac{1}{16}%
{\displaystyle\sum_{4^{-1}\left(  m,n\right)  \in P}}
\left(  \dfrac{m}{4}\right)  ^{2}\left(  \dfrac{n}{4}\right)  ^{3}+\dfrac
{1}{32}%
{\displaystyle\sum_{4^{-1}\left(  m,n\right)  \in\partial P}}
\left(  \dfrac{m}{4}\right)  ^{2}\left(  \dfrac{n}{4}\right)  ^{3}\\
=  &  \dfrac{54335}{16384}=3.316...
\end{align*}
The Riemann sum in Corollary \ref{Corollary 2} runs over $21$ sampling points,
and it gives a much better approximation to the integral above:
\[
\dfrac{1}{12}%
{\displaystyle\sum_{\substack{\left(  m,n\right)  \in\mathbb{Z}^{2}%
\setminus2\mathbb{Z}^{2}\\4^{-1}\left(  m,n\right)  \in P\ }}}
\left(  \dfrac{m}{4}\right)  ^{2}\left(  \dfrac{n}{4}\right)  ^{3}+\dfrac
{1}{24}%
{\displaystyle\sum_{\substack{\left(  m,n\right)  \in\mathbb{Z}^{2}%
\setminus2\mathbb{Z}^{2}\\4^{-1}\left(  m,n\right)  \in\partial P\ }}}
\left(  \dfrac{m}{4}\right)  ^{2}\left(  \dfrac{n}{4}\right)  ^{3}%
=\dfrac{37295}{12288}=3.035...
\]

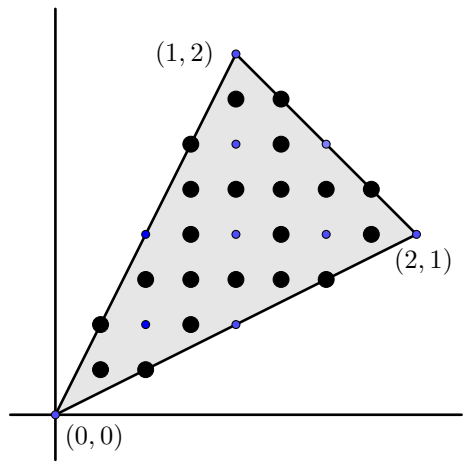
\begin{figure}[hptb]
\centering
\definecolor{xdxdff}{rgb}{0.49019607843137253,0.49019607843137253,1.}
\definecolor{qqqqff}{rgb}{0.,0.,1.}
\definecolor{ududff}{rgb}{0.30196078431372547,0.30196078431372547,1.}
\begin{tikzpicture}[line cap=round,line join=round,>=triangle 45,x=0.6cm,y=0.6cm]
\fill[line width=0.3pt,fill=black,fill opacity=0.1] (-7.,-6.) -- (1.,-2.) -- (-3.,2.) -- cycle;
\draw [line width=1.pt] (-7.,-6.)-- (1.,-2.);
\draw [line width=1.pt] (1.,-2.)-- (-3.,2.);
\draw [line width=1.pt] (-3.,2.)-- (-7.,-6.);
\draw [line width=1.pt] (-8.,-6.)-- (2.,-6.);
\draw [line width=1.pt] (-7.,-7.)-- (-7.,3.);
\draw (-7.,-6) node[anchor=north west] {$(0,0)$};
\draw (0.3,-2.1) node[anchor=north west] {$(2,1)$};
\draw (-5.,2.5) node[anchor=north west] {$(1,2)$};
\draw [fill=ududff] (-7.,-6.) circle (1.5pt);
\draw [fill=ududff] (-3.,-4.) circle (1.5pt);
\draw [fill=ududff] (1.,-2.) circle (1.5pt);
\draw [fill=ududff] (-3.,2.) circle (1.5pt);
\draw [fill=qqqqff] (-5.,-2.) circle (1.5pt);
\draw [fill=black] (-6.,-5.) circle (3.pt);
\draw [fill=black] (-6.,-4.) circle (3.pt);
\draw [fill=qqqqff] (-5.,-4.) circle (1.5pt);
\draw [fill=black] (-5.,-5.) circle (3.pt);
\draw [fill=black] (-5.,-3.) circle (3.pt);
\draw [fill=black] (-4.,-3.) circle (3.pt);
\draw [fill=black] (-4.,-4.) circle (3.pt);
\draw [fill=black] (-4.,-2.) circle (3.pt);
\draw [fill=black] (-4.,-1.) circle (3.pt);
\draw [fill=black] (-4.,0.) circle (3.pt);
\draw [fill=ududff] (-3.,0.) circle (1.5pt);
\draw [fill=black] (-3.,1.) circle (3.pt);
\draw [fill=black] (-3.,-1.) circle (3.pt);
\draw [fill=ududff] (-3.,-2.) circle (1.5pt);
\draw [fill=black] (-3.,-3.) circle (3.pt);
\draw [fill=black] (-2.,-3.) circle (3.pt);
\draw [fill=black] (-2.,-2.) circle (3.pt); \draw [fill=black] (-2.,-1.) circle (3.pt); \draw [fill=black] (-2.,0.) circle (3.pt); \draw [fill=black] (-2.,1.) circle (3.pt); \draw [fill=xdxdff] (-1.,0.) circle (1.5pt); \draw [fill=black] (-1.,-1.) circle (3.pt); \draw [fill=ududff] (-1.,-2.) circle (1.5pt); \draw [fill=black] (-1.,-3.) circle (3.pt); \draw [fill=black] (0.,-2.) circle (3.pt); \draw [fill=black] (0.,-1.) circle (3.pt); \end{tikzpicture}
\caption{Small  and large sampling points in $4^{-1}\mathbb{Z}^{2}$ in Corollary
\ref{Corollary 1}, and large sampling points in $4^{-1}\mathbb{Z}^{2}%
-2^{-1}\mathbb{Z}^{2}$ in Corollary \ref{Corollary 2}.}%
\end{figure}

\bigskip \bigskip

\newpage
\end{document}